\theoremstyle{plain}
\newtheorem{thm}[subsection]{Theorem}
\newtheorem{lem}[subsection]{Lemma}
\newtheorem{prop}[subsection]{Proposition}
\newtheorem{cor}[subsection]{Corollary}
\theoremstyle{definition}
\newtheorem{rk}[subsection]{Remark}
\newtheorem{ex}[subsection]{Example}
\numberwithin{equation}{section}
\newcommand{\A}{{\mathcal A}}
\newcommand{\B}{{\mathcal B}}
\newcommand{\D}{{\mathcal D}}
\newcommand{\CC}{{\mathcal C}}
\newcommand{\PPP}{{\mathcal P}}
\newcommand{\R}{\mathbb{R}}
\newcommand{\C}{\mathbb{C}}
\newcommand{\PP}{\mathbb{P}}
\begin{document}


\title [On some free line arrangements]
{Free line arrangements with low maximal multiplicity}

\author[Alexandru Dimca]{Alexandru Dimca}
\address{Universit\'e C\^ ote d'Azur, CNRS, LJAD, France and Simion Stoilow Institute of Mathematics,
P.O. Box 1-764, RO-014700 Bucharest, Romania.}
\email{Alexandru.Dimca@univ-cotedazur.fr}

\author[Lukas K\"uhne]{Lukas K\"uhne}
\address{Universit\"at Bielefeld, Fakult\"at f\"ur Mathematik, Bielefeld, Germany.}
\email{lkuehne@math.uni-bielefeld.de}

\author[Piotr Pokora]{Piotr Pokora}
\address{Department of Mathematics,
University of the National Education Commission Krakow,
Podchor\c a\.zych 2,
PL-30-084 Krak\'ow, Poland.}
\email{piotrpkr@gmail.com, piotr.pokora@uken.krakow.pl}

\subjclass[2020]{Primary: 14H50; Secondary: 32S25, 13D02}

\keywords{line arrangements; freeness; Terao's Conjecture}

\begin{abstract} 
Let $\A$ be a free arrangement of $d$ lines in the complex projective plane, with exponents $d_1\leq d_2$. Let $m$ be the maximal multiplicity of points in $\A$. In this note, we describe first the simple cases $d_1 \leq m$. Then we study  the case $d_1=m+1$, and describe which line arrangements can occur by deleting or adding a line to $\A$.
 When $d \leq 14$, there are only two  free arrangements with $d_1=m+2$, namely one with degree $13$ and the other with degree $14$. We study their geometries in order to deepen our understanding of the structure of free line arrangements in general.

\end{abstract}
 
\maketitle

\section{Introduction}

Let $S=\C[x,y,z]$ be the graded polynomial ring in the variables $x,y,z$ with the complex coefficients. Let $\A : f=0$ be a line arrangement of degree $d$ in the complex projective plane $\PP^2$. Denote by $J_{f}$ the Jacobian ideal generated by all partial derivatives of $f$, and we define the Milnor algebra as $M(f) = S/J_{f}$. Recall that if $\A : f=0$ is a line arrangement in $\mathbb{P}^{2}$, then $M(f)$ admits the following minimal resolution: 
$$0 \rightarrow \bigoplus_{i=1}^{s-2}S(-e_{i}) \rightarrow \bigoplus_{i=1}^{s}S(1-d - d_{i}) \rightarrow S^{3}(1-d)\rightarrow S \rightarrow M(f) \rightarrow 0,$$
where $e_{1} \leq e_{2} \leq \ldots \leq e_{s-2}$ and $1\leq d_{1} \leq \ldots \leq d_{s}$, see for instance \cite{HS}. The integers $d_1 \leq \ldots \leq d_s$ are called the exponents of $\A$. When $s=2$, then $\A$ is called \textit{free}. If $\A$ is a free line arrangement and $\overline \A$ is another line arrangement with the same intersection lattice, then
Terao's Conjecture predicts that $\overline \A$ is also free.

If $\A$ is a line arrangement and $p$ is a point in $\A$, then the multiplicity $m_p$ of $p$ in $\A$ is by definition the number of lines in $\A$ containing the point $p$. Let $m=m(\A)$ be the maximal multiplicity of the intersection points in $\A$. It is known that either $d_1=d-m$, or
$$m-1 \leq d_1 <d-m,$$
see \cite{Mich}. Moreover, if $d_1 \in \{m-1,m\}$, then Terao's Conjecture holds for $\A$, see \cite[Remark 5.4]{Der} and Propositions \ref{prop00} and \ref{prop01}, where these two cases are briefly discussed.
The same holds when $d_1=d-m$, see \cite{Mich}.

From now on we consider free line arrangements $\A$ such that
$$d_1=m+ \epsilon <d-m,$$
for some integer $\epsilon >0$.
Note that if $\overline \A$ is another line arrangement with the same intersection lattice as $\A$ and $\overline \A$ is not free, then we have
$$m \leq \overline d_1 < m+ \epsilon,$$
where $\overline d_1$ is the minimal exponent of $\overline \A$.
{\it It follows that the bigger $\epsilon$ is, the more space there is for counterexamples to Terao's Conjecture.} 

In Section~\ref{sec2}, we discuss the case $\epsilon =1$, see Theorems \ref{thm02} and \ref{thm03}, which yields a precise description of the behavior of a free arrangement with $d_1=m+1$ when deleting or adding a line.

Then, in Section~\ref{sec3}, we consider the case $\epsilon \geq 2$, in which free arrangements with $d_1 = m + \epsilon$ appear to be rather rare.
First we show in Proposition \ref{bound} a new bound on $d=|\A|$ in terms of $m$ and $\epsilon$. In particular, this shows that $d \geq 15$ when $\epsilon=3$, see Remark \ref{rkE3}.
For $\epsilon =2$ and $d \leq 14$, there are only two such arrangements, $\A$ (resp. $\CC$) containing $d=13$ (resp. $d=14$) lines, and we study them in the remaining part of Section 3.
The fact that there are only two such free arrangements in this range, as well as their equations described below, follows from the article~\cite{BBJKL21} and the associated database \cite{BK} constructed to verify Terao's Conjecture up to $d\le 14$~\cite{BK23}.
Starting with their equations, we give geometric proofs (i.e. without using computer-aided computations) for their freeness, see Theorems \ref{thm2} and \ref{thm20}. 

Although the freeness of the arrangements $\A$ and $\CC$ can be established more directly by observing that they are divisionally free (see Remark~\ref{rkDF}), we have chosen to present the longer proofs given in Theorems~\ref{thm2} and~\ref{thm20}. These arguments rely on more general techniques and may therefore be applicable to similar arrangements that are not necessarily divisionally free.

We hope that further study of such examples will allow us to obtain results for the cases $\epsilon > 1$ analogous to those established for $\epsilon = 1$.

\section{On free line arrangements with \texorpdfstring{$m-1 \leq d_1 \leq m+1$}{}}\label{sec2} 
 For integers $d > d_1>0$, following \cite{dPCTC}, we define
$$\tau_{max}(d,d_1)=(d-1)^2-d_1(d-d_1-1).$$
Let $\A$ be an arrangement of $d$ lines in $\PP^2$ and let $d_1\leq d_2$ be the smallest two exponents of $\A$. It is well-known, see for instance \cite{Dmax,dPCTC}, that $\A$ is free if and only if
\begin{equation}
\label{eqCTC}
\tau(\A)=\tau_{max}(d,d_1),
\end{equation}
where $\tau(\A)$ is the global Tjurina number of the arrangement $\A$.
Say $n_r$ is the number of intersection points in $\A$ of multiplicity $r$.
The Tjurina number $\tau(\A)$ can be combinatorially computed as
\[
\tau(\A) = \sum_{r\ge 2}n_r(r-1)^2.
\]
Since the exponent $d_1$ is not determined by the combinatorics, we try in this section to describe results similar to the above one, but involving
easy combinatorial properties of $\A$. 
Let $m=m(\A)$ be the maximal multiplicity of points of $\A$. If $d-m \leq m-1$, then it is known that
$$d_1=d-m$$
see \cite{Mich}. Hence we can apply \eqref{eqCTC} to obtain an obvious characterization of free arrangements.
In the remaining cases, when $d+1 >2m$, it is known that
$$m-1 \leq d_1 \leq d_2 \leq d-m,$$
and the equality $m-1=d_1$ implies that the line arrangement $\A$ is free with exponents
$(d_1,d_2)=(m-1,d-m)$, see again \cite{Mich}. In fact, we have the following more precise result, see \cite[Theorem 1.12  (1)]{Der}.

Recall that a point $p \in A$ of multiplicity $m_p \geq 2$  is a modular point for the line arrangement $\A$ if for any other point 
$q \in \A$ of multiplicity $m_q \geq 2$, the line determined by the points $p$ and $q$ is in $\A$. A line arrangement having a modular point is called  supersolvable, see \cite{OT}.
 \begin{prop}
\label{prop00}
Assume that $\A \subset \mathbb{P}^{2}$ is a line arrangement satisfying
$$\tau(\A)=\tau_{max}(d,m-1)$$
where $d=|\A|$ and $m=m(\A)$. Then $\A$ is a free line arrangement with exponents
$(d_1,d_2)=(m-1,d-m)$. In addition, $\A$ is a supersolvable line arrangement and any point $p$ in $\A$ with multiplicity $m$ is a modular point for $\A$.
\end{prop}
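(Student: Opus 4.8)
The plan is to first establish freeness with the stated exponents, and then deduce the combinatorial/geometric conclusion that a maximal point is modular. For the freeness part, I would start from the inequality chain $m-1 \leq d_1 \leq d_2 \leq d-m$, valid in the range $d+1 > 2m$ (the case $d-m \leq m-1$ being excluded since then $\tau_{max}(d,m-1)$ would not be the relevant value; one should check consistency, but in that degenerate situation $d_1 = d-m \leq m-1$ and one argues separately or notes it cannot satisfy the hypothesis). The key point is the general inequality $\tau(\A) \leq \tau_{max}(d, d_1)$ with equality if and only if $\A$ is free, together with monotonicity of $\tau_{max}(d, d_1)$ in $d_1$ on the relevant range: since $\tau_{max}(d,d_1) = (d-1)^2 - d_1(d-d_1-1)$ is a downward-opening parabola in $d_1$ maximized near $d_1 = (d-1)/2$, on the interval $d_1 \in [m-1, d-m]$ (which lies to the left of the vertex when $d+1 > 2m$) it is increasing, hence $\tau_{max}(d, d_1) \geq \tau_{max}(d, m-1)$ for all admissible $d_1 \geq m-1$. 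Combining $\tau(\A) = \tau_{max}(d, m-1) \leq \tau_{max}(d, d_1) $ forces... wait, this inequality goes the wrong way unless $d_1 = m-1$; the correct route is: we always have $\tau(\A) \le \tau_{max}(d, d_1)$, and I claim also $\tau_{max}(d, m-1) \le \tau(\A)$ fails in general, so instead use that $\tau(\A) = \tau_{max}(d, m-1)$ combined with $\tau(\A) \le \tau_{max}(d,d_1)$ and the fact that $d_1$ is the \emph{actual} smallest exponent gives $\tau_{max}(d,m-1) \le \tau_{max}(d,d_1)$, whence $d_1 \le m-1$ by the (strict) monotonicity; together with $d_1 \ge m-1$ this yields $d_1 = m-1$ and equality in $\tau(\A) \le \tau_{max}(d,d_1)$, i.e. freeness with $d_2 = d - 1 - d_1 = d - m$.

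For the second assertion, let $p \in \A$ be a point of multiplicity $m_p = m$. I would use the standard deletion–restriction setup: let $\A' = \A \setminus \{L\}$ where $L$ is \emph{not} one of the $m$ lines through $p$ — but actually the cleaner approach is the one via the pencil of lines through $p$. Since $p$ has multiplicity $m$, the $m$ lines through $p$ partition the remaining $d - m$ lines, and one counts intersection points on a generic line versus on the lines through $p$. Concretely, I would invoke the characterization of supersolvability through a modular point of maximal multiplicity: a point $p$ of multiplicity $m$ is modular precisely when every line of $\A$ not through $p$ meets the lines through $p$ in at most ... — more usefully, $p$ is modular iff $\A$ restricted appropriately is "combed" by the pencil at $p$, which happens iff the exponents are $(m-1, d-m)$ and $p$ realizes the multiplicity $m$. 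This is exactly the content of \cite[Theorem 1.12(1)]{Der}, so the honest thing is to cite that once freeness with these exponents is in hand; but to give a self-contained argument I would show: fix $p$ with $m_p = m$; every other multiple point $q$ lies on some line through $p$ (else the line $\overline{pq}$ would not be in $\A$, and a Hirzebruch-type / counting argument using $d_1 = m-1$ forces a contradiction with $\tau(\A) = \tau_{max}$). The mechanism is that each line $\ell \notin \A$ through $p$ and a point $q$ of multiplicity $\ge 2$ off the pencil would "waste" Tjurina number, pushing $\tau(\A)$ strictly below $\tau_{max}(d, m-1)$.

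The main obstacle I anticipate is the second part: proving modularity cleanly without simply quoting \cite{Der}. The freeness is essentially a one-line consequence of the $\tau_{max}$ inequality plus monotonicity and the known bounds on $d_1$. But extracting "$p$ is modular" requires a genuinely combinatorial argument — showing that the equality case of the Tjurina bound forces the pencil through $p$ to dominate the entire incidence structure. I would handle this by comparing the arrangement $\A$ with the "maximal" arrangement of $d$ lines having a point of multiplicity $m$ and otherwise only double points off the pencil (which is supersolvable by construction), showing its Tjurina number equals $\tau_{max}(d, m-1)$, and then arguing that any deviation from this model — i.e. any multiple point not on a line through $p$, or any line through $p$ carrying "extra" structure — strictly decreases $\tau$. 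Since $\tau$ is combinatorial via $\tau(\A) = \sum_{r \ge 2} n_r (r-1)^2$, this becomes a finite optimization over the possible point-multiplicity data subject to the line-count constraints $\sum_{\ell \ni p} (\text{points on } \ell) $ and $\sum_r \binom{r}{2} n_r = \binom{d}{2}$, and the unique maximizer is the supersolvable model with $p$ modular.
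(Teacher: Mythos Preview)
The paper does not prove this proposition: it simply cites \cite[Theorem 1.12\,(1)]{Der} for the full statement, including the supersolvability assertion. So there is no in-paper argument to compare against; what follows is an assessment of your outline on its own terms.

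Your freeness argument is the right idea (and is exactly how the paper argues the analogous Proposition~\ref{prop01}), but the description of $\tau_{max}(d,\cdot)$ is garbled: as a function of $d_1$ it is an \emph{upward}-opening parabola, \emph{minimized} at $(d-1)/2$, and the interval $[m-1,d-m]$ is \emph{centered} on that vertex, not to its left. What you need is that $\tau_{max}(d,\cdot)$ is strictly decreasing on $[m-1,(d-1)/2]$; then $\tau(\A)=\tau_{max}(d,m-1)\le \tau_{max}(d,d_1)$ together with $d_1\ge m-1$ forces $d_1=m-1$ on the left half, and the symmetric value $d_1=d-m$ is excluded because it would give $d_1=d_2=d-m$ and hence (by freeness from equality in the du Plessis--Wall bound) $2(d-m)=d-1$, contradicting $d+1>2m$.

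For the modularity part, your instinct to use line-count constraints through $p$ is correct, but framing it as an ``optimization over the possible $(n_r)$'' is misleading: different incidence structures can share the same $(n_r)$, and your ``model'' with only double points off the pencil is not the unique maximizer (any supersolvable arrangement with a modular point of multiplicity $m$ achieves the same $\tau$). The argument that actually works is a direct identity, not an optimization. Write $\tau(\A)=\sum_q(m_q-1)^2=\sum_q m_q(m_q-1)-\sum_q(m_q-1)=d(d-1)-\sum_q(m_q-1)$. On each of the $m$ lines $L_i$ through $p$ one has $\sum_{q\in L_i}(m_q-1)=d-1$, hence $\sum_{q\in L_i,\,q\ne p}(m_q-1)=d-m$; since any $q\ne p$ lies on at most one such $L_i$, summing over $i$ gives $\sum_{q\in P}(m_q-1)=m(d-m)$, where $P$ is the set of multiple points $\ne p$ lying on some line through $p$. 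Letting $Q$ be the remaining multiple points, one obtains
\[
\tau(\A)=d(d-1)-(m-1)-m(d-m)-\sum_{q\in Q}(m_q-1)=\tau_{max}(d,m-1)-\sum_{q\in Q}(m_q-1).
\]
Thus $\tau(\A)=\tau_{max}(d,m-1)$ forces $Q=\emptyset$, i.e.\ $p$ is modular. Note this single computation gives supersolvability (and hence freeness with exponents $(m-1,d-m)$) directly from the hypothesis, so your two-step plan can be collapsed into one.
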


It follows that the interesting cases for understanding the structure of free line arrangements are described by the inequalities
$$m \leq d_1 \leq d_2 \leq d-m.$$
One has the following result.
 \begin{prop}
\label{prop01}
Assume that $\A$ is a line arrangement satisfying
$$\tau(\A)=\tau_{max}(d,m)$$
where $d=|\A|$ and $m=m(\A)$. Then $\A$ is a free line arrangement with exponents
$(d_1,d_2)=(m,d-m-1)$.
\end{prop}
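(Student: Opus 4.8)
Write $d_1\le d_2$ for the two smallest exponents of $\A$. The plan is to prove that $d_1=m$; granting this, the hypothesis $\tau(\A)=\tau_{max}(d,m)=\tau_{max}(d,d_1)$ together with \eqref{eqCTC} gives that $\A$ is free, and the relation $d_1+d_2=d-1$ valid for free line arrangements then forces $d_2=d-m-1$, as asserted. The tools I would combine are: the du Plessis--Wall bounds on the global Tjurina number (see \cite{dPCTC}), namely $\tau(\A)\le\tau_{max}(d,d_1)$ when $2d_1\le d-1$, with equality precisely when $\A$ is free, and $\tau(\A)\le\tau_{max}(d,d_1)-\binom{2d_1-d+2}{2}$ when $2d_1\ge d$; the bounds on $d_1$ from \cite{Mich}, namely $d_1=d-m$ if $d\le 2m-1$ and $m-1\le d_1\le d_2\le d-m$ if $d\ge 2m$; and the elementary identity $\tau_{max}(d,r)-\tau_{max}(d,r')=(r-r')(r+r'-d+1)$, which in particular makes $r\mapsto\tau_{max}(d,r)$ strictly decreasing on $\{0,1,\dots,\lfloor(d-1)/2\rfloor\}$.

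I would first dispose of the cases in which $2d_1\le d-1$. If $d\le 2m-1$, then $d_1=d-m$ by \cite{Mich} and $2d_1=2(d-m)\le d-1$, so $\tau(\A)\le\tau_{max}(d,d-m)=\tau_{max}(d,m)+(d-2m)<\tau_{max}(d,m)$, contradicting the hypothesis; hence $d\ge 2m$ and $m-1\le d_1\le d-m$. Now assume also $2d_1\le d-1$. When $d_1\ge m+1$ one has $m<d_1\le(d-1)/2$, so monotonicity of $\tau_{max}(d,\cdot)$ gives $\tau(\A)\le\tau_{max}(d,d_1)<\tau_{max}(d,m)$, which is impossible. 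When $d_1=m-1$, the arrangement is free with exponents $(m-1,d-m)$ by \cite{Mich}, so $\tau(\A)=\tau_{max}(d,m-1)=\tau_{max}(d,m)+(d-2m)$, and the hypothesis forces $d=2m$; then $d_2=m$ and the exponent multiset $\{m-1,m\}$ equals $\{m,d-m-1\}$, consistent with the statement (read unordered). When $d_1=m$, the hypothesis gives $\tau(\A)=\tau_{max}(d,m)=\tau_{max}(d,d_1)$, so $\A$ is free with $(d_1,d_2)=(m,d-m-1)$.

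The remaining case $2d_1\ge d$ is the heart of the matter, and I expect it to be the main obstacle: it cannot be excluded using the in‑paper results alone, since line arrangements genuinely can have a large minimal exponent (a generic arrangement of $d$ lines has $d_1=d-2$), so one must bring in the sharper du Plessis--Wall inequality valid for $2d_1\ge d$. Setting $t=2d_1-d\ge 0$, a direct expansion (routine but not obvious) yields
\[
\tau_{max}(d,d_1)-\binom{2d_1-d+2}{2}-\tau_{max}(d,m)=\frac{1-(d-2m-1)^2-(t+2)^2}{4},
\]
and since $t+2\ge 2$ this is at most $-3/4<0$. Hence $\tau(\A)<\tau_{max}(d,m)$, again contradicting the hypothesis, so $2d_1\ge d$ never occurs. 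Collecting the cases, the only surviving possibility is that $\A$ is free with exponents $(d_1,d_2)=(m,d-m-1)$; the sole point still requiring a little care is the boundary value $d=2m$, where $d-m-1=m-1$ and the exponent pair must be understood unordered.
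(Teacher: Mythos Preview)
Your proof is correct and follows essentially the same approach as the paper: use \cite{Mich} to bound $d_1\ge m-1$, then invoke the du Plessis--Wall inequalities from \cite{dPCTC} to rule out $d_1\ne m$. Your treatment is more explicit than the paper's---you separate the regime $2d_1\ge d$ where the correction term $\binom{2d_1-d+2}{2}$ is needed, and you handle the boundary case $d=2m$ where the paper's claim ``$\tau_{max}(d,m-1)\ne\tau_{max}(d,m)$'' actually fails and the exponent pair must be read as unordered---but these are careful elaborations of the same argument rather than a different route.
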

\proof
We know that $d_1 \geq m-1$. The equality $d_1=m-1$ would give by the above discussion
$\tau(\A)=\tau_{max}(d,m-1).$ However, a simple computation shows that
$$\tau_{max}(d,m-1) \ne \tau_{max}(d,m),$$
and hence $d_1>m-1$.
On the other hand, the inequality $d_1 >m$ would imply
$$\tau(\A) <\tau_{max}(d,m),$$
see \cite{dPCTC}. Hence the only possibility is $d_1=m$, and the claim follows from our discussion above.
\endproof
 \begin{rk}
\label{rk01}
Assume that $\A$ is a line arrangement satisfying $d_1=m=m(\A)$ and let $p \in \A$ be a point of multiplicity $m$.
Then it follows from  \cite[Theorem 1.12  (2)]{Der} that all the multiple points in $\A$ not connected to $p$ by lines in $\A$ are situated on a line $L$. The example of the arrangement
$$\A: x(x^{m-1}-y^{m-1})(y^{m-1}-z^{m-1})(x^{m-1}-z^{m-1})=0,$$
where $p=(0:0:1)$, shows that $L$ may not be unique. Indeed, the only point not connected to $p$ is $q=(1:0:0)$ and hence any line $L$ with $q \in L$ will do the job. The line $L_y: y=0$ and the line $L_z: z=0$ are special, since when we add $L_y$ or $L_z$ to $\A$ we get a supersolvable arrangement. They are characterized by the fact that $q \in L$ and the number of points in the intersection $L \cap \A$ is minimal, namely $m+1$. 

{\it It is an interesting open question whether the line $L$ in 
 \cite[Theorem 1.12  (2)]{Der} can be always chosen such that $\A \cup L$ is supersolvable. This will give a strong information on the structure of line arrangements satisfying $d_1=m(\A)$, namely they are obtained from a supersolvable line arrangement by deleting a suitable line.}
\end{rk}

We need the following result to continue our discussion.

\begin{lem}
\label{lem01}
Let $\A$ be a line arrangement, $L \in \A$ be a line and denote by $\A'$ the line arrangement obtained from $\A$ by deleting the line $L$.
Then
$$\tau(\A)-\tau(\A')=2(d-1)-r_L$$
where $d=|\A|$ and $r_L=|L \cap \A'|$.
 \end{lem}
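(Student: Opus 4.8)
The plan is to compute how the Tjurina number changes when we remove a single line $L$ from $\A$, using the combinatorial formula $\tau(\A)=\sum_{r\ge 2}n_r(r-1)^2$. Write $\A' = \A \setminus \{L\}$, and for each intersection point $p$ of $\A$ let $m_p$ denote its multiplicity in $\A$. The key observation is that deleting $L$ affects the multiplicity only of those points that lie on $L$: if $p \notin L$, then $p$ retains multiplicity $m_p$ in $\A'$; if $p \in L$, then its multiplicity drops to $m_p - 1$ in $\A'$ (and $p$ ceases to be an intersection point of $\A'$ exactly when $m_p = 2$, in which case it contributes $0$ anyway, consistently with the formula).

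First I would write
$$\tau(\A) - \tau(\A') = \sum_{p \in L \cap \A'} \Bigl[(m_p - 1)^2 - (m_p - 2)^2\Bigr] = \sum_{p \in L \cap \A'} (2m_p - 3),$$
where the sum runs over the $r_L = |L \cap \A'|$ intersection points of $\A$ lying on $L$. Next I would evaluate $\sum_{p \in L \cap \A'} m_p$: since every one of the $d-1$ lines of $\A'$ meets $L$ in exactly one point, and the lines of $\A'$ through a given point $p \in L$ number exactly $m_p - 1$, we get $\sum_{p \in L \cap \A'}(m_p - 1) = d-1$, hence $\sum_{p \in L \cap \A'} m_p = (d-1) + r_L$. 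Substituting back,
$$\tau(\A) - \tau(\A') = 2\bigl((d-1) + r_L\bigr) - 3 r_L = 2(d-1) - r_L,$$
which is the claimed identity.

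There is no serious obstacle here; the only point requiring a little care is the bookkeeping at double points of $\A$ lying on $L$ (where the point disappears from $\A'$) and, symmetrically, making sure no new intersection points are created in passing from $\A$ to $\A'$ — but deleting a line can only merge or destroy incidences, never create them, so the formula $\tau(\A') = \sum_{r\ge 2} n'_r (r-1)^2$ applies verbatim with $n'_r$ the multiplicity counts of $\A'$. One should also note the degenerate possibility that $L$ passes through no intersection point of $\A'$ is impossible once $d \ge 2$, and in any case the formula $r_L \le d-1$ holds automatically; the identity is valid as stated without further hypotheses.
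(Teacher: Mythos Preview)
Your proof is correct and follows essentially the same approach as the paper: both compute the change in $\tau$ as a sum over the points of $L\cap\A'$ of the difference of squared local contributions, and then evaluate the linear sum via the fact that each of the $d-1$ lines of $\A'$ meets $L$ once (the paper phrases this as B\'ezout's Theorem). The only cosmetic difference is that the paper indexes by the multiplicity $m_q'$ in $\A'$, obtaining $\sum(2m_q'-1)$, whereas you index by the multiplicity $m_p=m_q'+1$ in $\A$, obtaining the equivalent $\sum(2m_p-3)$.
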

 \proof
 For any point $q \in \A' \cap L$, denote by $m_q'\geq 1$ its multiplicity in $\A'$. 
 One has
 $$\tau(\A)-\tau(\A')=\sum_{q \in L \cap \A'}(m_q'^2-(m_q'-1)^2)=\sum_{q \in L \cap \A'}(2m_q'-1)=2(d-1)-r_L,$$
 since $\sum_{q \in L \cap \A'}m_q'=\deg(\A')=d-1$ by B\'ezout's Theorem.
 \endproof
 
We now consider line arrangements satisfying $d_1=m(\A)+1$. Recall that an arrangement $\mathcal{A} \subset \mathbb{P}^{2}$ of $d$ lines is \textit{plus-one generated} if $s=3$ and $d_{1}+d_{2}=d$

 \begin{thm}
\label{thm02}
Assume that $\A$ is a line arrangement satisfying
$$\tau(\A)=\tau_{max}(d,m+1)$$
where $d=|\A|$ and $m=m(\A)\leq \frac{d-3}{2}$. Let $L \in \A$ and let $\A'$ be the line arrangement obtained from $\A$ by deleting the line $L$.
If $m=m(\A')$, then one of the following situations occurs.
\begin{enumerate} 

\item $r_L=2(d-1)-3m$ and $\A'$ is free with exponents $(m-1,d-m-1)$. In this case $3m \geq d-1$ and the arrangement $\A$ is plus-one generated with exponents $(m,d-m, r_L-1)$.

\item $r_L=d-m-1$ and $\A'$ is free with exponents $(m,d-m-2)$. In this case $\A$ is free with exponents $(m+1,d-m-2)$ .

\item In all other cases one has $r_L <d-1-m$. In particular, if in addition $\A$ is free with
exponents $(m+1,d-m-2)$, then either $m+1<d-m-2$ and $\A'$ is free with exponents $(m+1,d-m-3)$, or $m+1 \leq d-m-2$ and $\A'$ is a plus-one generated arrangement with exponents $(m+1,d-m-2, d-r_L)$.
\end{enumerate} 
\end{thm}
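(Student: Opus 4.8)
The plan is to convert the hypothesis into a statement about Tjurina numbers, pin down the smallest exponent of $\A$ and of $\A'$, and then read off the structure of $\A'$ --- and of $\A$ in case~(1) --- from the addition-deletion machinery. As a first step I would determine $d_1$, the smallest exponent of $\A$. Since $m(\A)=m$ and $d>2m$ (because $m\le\frac{d-3}{2}$), the result of \cite{Mich} recalled in the introduction gives $m-1\le d_1\le d-m$, while $d_1=m-1$ would make $\A$ free with exponents $(m-1,d-m)$, forcing $\tau(\A)=\tau_{max}(d,m-1)\ne\tau_{max}(d,m+1)$; so $d_1\ge m$. The values $d-m-2,d-m-1,d-m$ are ruled out using the du Plessis--Wall estimates: $d_1=d-m-2$ would make $\A$ free with smallest exponent $d-m-2>m+1$, which is absurd, and for $d_1\in\{d-m-1,d-m\}$ --- which lie past the vertex $\frac{d-1}{2}$ of $r\mapsto\tau_{max}(d,r)$ when $m<\frac{d-3}{2}$ --- the du Plessis--Wall formula determines $\tau(\A)$ and one checks it is never $\tau_{max}(d,m+1)$, the borderline $m=\frac{d-3}{2}$ being treated directly. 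Hence $d_1\in\{m,m+1\}$: either $\A$ is free with exponents $(m+1,d-m-2)$, or $d_1=m$.

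Next I would pass to $\A'$. By Lemma~\ref{lem01},
$$\tau(\A')=\tau_{max}(d,m+1)-2(d-1)+r_L .$$
Since $m(\A')=m$ and $|\A'|=d-1>2m$, \cite{Mich} applies to $\A'$ as well: its smallest exponent $d_1'$ satisfies $m-1\le d_1'\le d-1-m$, with $d_1'=m-1$ forcing $\A'$ free with exponents $(m-1,d-m-1)$; moreover the addition-deletion bound for the minimal relation degree gives $d_1'\in\{d_1-1,\,d_1,\,r_L-1\}$. Combining $d_1\le m+1$, the freeness criterion \eqref{eqCTC} for $\A'$, the inequality $\tau(\A')\le\tau_{max}(d-1,d_1')$, and the symmetry of the interval $[m-1,d-1-m]$ about the vertex $\frac{d-2}{2}$ of $r\mapsto\tau_{max}(d-1,r)$ (so that $\tau_{max}(d-1,d_1')\le\tau_{max}(d-1,m-1)$), the displayed identity yields the universal bound $r_L\le 2(d-1)-3m$, with equality if and only if $\A'$ is free with exponents $(m-1,d-m-1)$.

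The three cases then follow according to the value of $r_L$. (1) If $r_L=2(d-1)-3m$, the equality case above gives $\A'$ free with exponents $(m-1,d-m-1)$; B\'ezout gives $r_L\le d-1$, hence $3m\ge d-1$; since $m\le\frac{d-3}{2}$ forces $r_L-1=2d-3-3m>d-m-1$, it is not an exponent of $\A'$, Terao's addition-deletion fails for $\A'\subset\A$, an exponent count rules out $r_L\in\{m,d-m\}$ so $\A$ is not free, and therefore --- by the theorem, due to Abe and to Dimca--Sticlaru, that adjoining a line to a free arrangement yields a free or a plus-one generated arrangement --- $\A$ is plus-one generated with exponents $(m,d-m,r_L-1)$. (2) If $r_L=d-m-1$, substitution gives $\tau(\A')=\tau_{max}(d-1,m)$; the value $d_1'=m-1$ is excluded (it would give $\tau(\A')=\tau_{max}(d-1,m-1)\ne\tau_{max}(d-1,m)$), and a short case-check using $d_1\le m+1$, the list $d_1'\in\{d_1-1,d_1,r_L-1\}$, \eqref{eqCTC} and monotonicity also excludes $d_1'\ge m+1$, so $d_1'=m$ and $\A'$ is free with exponents $(m,d-m-2)$; as $r_L-1=d-m-2$ is an exponent of $\A'$, Terao's addition theorem gives $\A$ free with exponents $(m+1,d-m-2)$. (3) Otherwise, one would first check $r_L<d-1-m$: if $d-m\le r_L\le 2(d-1)-3m-1$ then $\tau_{max}(d-1,m)<\tau(\A')<\tau_{max}(d-1,m-1)$, incompatible with $\tau(\A')\le\tau_{max}(d-1,d_1')$ for each admissible $d_1'$; then, under the additional hypothesis that $\A$ is free with exponents $(m+1,d-m-2)$, Terao's addition-deletion applied to $\A'\subset\A$ gives that either $r_L-1$ is an exponent of $\A$ --- forcing $r_L=m+2$, hence $m+1<d-m-2$ and $\A'$ free with exponents $(m+1,d-m-3)$ --- or else $\A'$ is plus-one generated with exponents $(m+1,d-m-2,d-r_L)$.

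The chief obstacle is marshalling the addition-deletion toolkit in exactly the form needed: the du Plessis--Wall estimates controlling $d_1$; the bound $d_1'\in\{d_1-1,d_1,r_L-1\}$ under deletion; Terao's classical addition-deletion theorem, which when applicable fixes the exponents on both sides of $\A'\subset\A$ from $r_L$; and its refinements (Abe; Dimca--Sticlaru) for the non-applicable case, namely that adjoining a line to a free arrangement produces a free or a plus-one generated arrangement whose new exponent is determined by $r_L$, and dually for deletion. Granting these, the remainder is careful bookkeeping with the parabola $\tau_{max}(d-1,\cdot)$ --- the boundary case $m=\frac{d-3}{2}$, where $m+1$ is no longer strictly below the vertex, must be handled separately using the a priori bound on $d_1'$ --- and with the several Tjurina identities, arranged so that cases (1), (2), (3) genuinely exhaust the possibilities for $r_L$.
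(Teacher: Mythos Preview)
Your approach is essentially the paper's: constrain $d_1$ to $\{m,m+1\}$ via \cite{Mich} and \cite{dPCTC}, then constrain $d_1'$ to $\{m-1,m,m+1\}$ via the standard deletion bound $d_1-1\le d_1'\le d_1$, split into the cases $d_1'=m-1$ versus $d_1'\ge m$, translate each case into a value (or range) of $r_L$ through Lemma~\ref{lem01}, and finally invoke the addition--deletion results of \cite{POG} for the free and plus-one generated conclusions. The paper organizes the trichotomy by $d_1'$ rather than by $r_L$, but the content is identical.

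Two small inaccuracies worth cleaning up. First, the claim that ``the addition--deletion bound gives $d_1'\in\{d_1-1,d_1,r_L-1\}$'' is not a standard statement; the general fact is simply $d_1-1\le d_1'\le d_1$, and the option $r_L-1$ is spurious (it plays no role once you use the correct two-element range). Second, your ruling out of $d_1\in\{d-m-1,d-m\}$ by saying du~Plessis--Wall ``determines $\tau(\A)$'' is not right --- \cite{dPCTC} only gives an upper bound. The paper bypasses this detour entirely: it uses directly that $d_1>m+1$ would force $\tau(\A)<\tau_{max}(d,m+1)$ (this is the form in which \cite{dPCTC} is invoked, as in the proof of Proposition~\ref{prop01}), which immediately gives $d_1\le m+1$ without any case analysis near $d-m$.
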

\proof
The equality $\tau(\A)=\tau_{max}(d,m+1)$ implies $d_1 \leq m+1$.
On the other hand, the discussion just before Proposition \ref{prop00} implies that $d_1 \geq m$.
Let $d_1'$ be the minimal exponent of $\A'$. It is clear that 
$$ m-1 \leq d_1-1 \leq d_1' \leq d_1 \leq m+1.$$
Assume first that  $d_1'=m-1$. Then as seen above
$\A'$ is free and hence
$$\tau(\A')= (d-2)^2-(m-1)(d-m-1),$$
which gives the equality
$$\tau(\A)-\tau(\A')=3m.$$
 Lemma \ref{lem01} implies that $r_L=2(d-1)-3m$ and since $r_L \leq d-1$ we get $3m \geq d-1$.
To see that the arrangement $\A$ is plus-one generated with exponents $(m,d-m,r_L-1)$, we apply \cite[Theorem 1.4 (3)]{POG}.

Assume from now on that $d_1' \geq m$. By \cite{dPCTC} we have
$$\tau(\A') \leq \tau_{max}(d-1,m),$$
and the equality holds if and only if $\A'$ is free with exponents $(m,d-m-2)$. 
It follows that
$$\tau(\A)-\tau(\A') \geq \tau_{max}(d,m+1)-\tau_{max}(d-1,m)=d+m-1,$$
and the equality holds if and only if $\A'$ is free with exponents $(m,d-m-2)$. Lemma \ref{lem01} tells us that this equality is equivalent to the equality $r_L=d-1-m$. To see that in these conditions $\A$ is free with exponents $(m+1,d-m-2)$, we apply \cite[Theorem 1.4 (2)]{POG}.

The last claim in $(3)$ follows from  \cite[Theorem 1.3]{POG}, using either claim $(1)$ or claim $(3)$ there.
\endproof

 \begin{ex}
\label{ex02}
(i) Consider the arrangement
$$\A: f=yz(x^4-y^4)(y^4-z^4)(x^4-z^4)(x-2y)=0$$
and the line $L:x-2y=0$. Then this setting provides an example for Theorem \ref{thm02} $(1)$ above, where $d=15$, $m=6$, $r_L=10$.

(ii) Consider the line arrangement
$$\A: f=xyz(x+y)(x-y)(x+z)(x-z)(2y-z)(x+2y-z)(x-2y+z)(y-z)=0,$$
which is free with exponents $(5,5)$. It has $n_2=13$, $n_3= 2$ and
$n_4=6$. The line $L:z=0$ is such that $|L \cap \A'|=6$, the maximal value for such intersection sets, where $\A'$ is obtained from $\A$ by deleting $L$. 
 Similarly, consider the line arrangement
$$\A: f=xyz(x+y)(x+ey)(x+z)(x+ez)(y-ez)$$
$$(x+ey-e^2z)(x+e^2y+z)(y-z)(x-ey+ez) =0,$$
where $e^2-e+1=0$,
which is free with exponents $(5,6)$. It has $n_2=9$, $n_3= 7$ and
$n_4=6$. The line $L_{12}:x-ey+ez=0$ is such that $|L_{12} \cap \A'|=7$, the maximal value for such intersection sets in this case, where again $\A'$ is obtained from $\A$ by deleting $L$.
Hence both these examples corresponds to the claim in Theorem \ref{thm02} $(2)$ above.

(iii) Note that the monomial arrangement
$$\A: (x^m-y^m)(y^m-z^m)(x^m-z^m)=0$$
is free with exponents $(m+1,2m-2)$ where $m=m(\A)$, and for any line $L \in \A$ one has $r_L=m+1<d-m-1$, as in Theorem \ref{thm02} $(3)$. The corresponding arrangement
$\A'$ in this case is nearly free, a special class of plus-one generated arrangements, with exponents $(m+1,2m-2,2m-2)$,
as it follows from \cite[Theorem 1.3 (3)]{POG}.

 \end{ex}
 
  \begin{thm}
\label{thm03}
Assume that $\A$ is a line arrangement satisfying
$$\tau(\A)=\tau_{max}(d,m+1),$$
where $d=|\A|$ and $m=m(\A)\leq \frac{d-3}{2}$. Let $p \in \A$ be a point of multiplicity $m$ and $L$ a line passing through $p$ which is not in $\A$. Let $\B$ be the arrangement obtained by adding the line $L$ to $\A$. Then one of the following holds.
\begin{enumerate} 

\item $r_L=|L \cap \A|=3(m+1)-d$ and $\B$ is a free line arrangement with exponents $(m,d-m)$. In this case $m \geq (d-2)/3$ and the line arrangement $\A$ is a plus-one generated arrangement with exponents $(m,d-m,2d-3(m+1))$.

\item $r_L=|L \cap \A|=m+2$
and  $\B$ is free with exponents $(m+1,d-m-1)$. Then $\A$ is free with exponents $(m+1,d-m-2)$.

\item In all the other cases $r_L >m+2$. In particular, if in addition $\A$ is free with exponents $(m+1,d-m-2)$, then 
either $m+1<d-m-2$, $r_L=d-m-1$ and $\B$ is free with exponents $(m+2,d-m-2)$, or
$\B$ is a plus-one generated arrangement  with exponents $(m+2,d-m-1, r_L-1)$.

\end{enumerate} 
\end{thm}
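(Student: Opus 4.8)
The plan is to mirror closely the strategy used in the proof of Theorem~\ref{thm02}, but now applied to the arrangement $\B = \A \cup L$ in place of $\A' = \A \setminus L$; the symmetry between adding and deleting a line is exactly what makes this possible. First I would set $e = |\B| = d+1$ and observe that, since $p \in L$ is a point of multiplicity $m$ in $\A$, its multiplicity in $\B$ is $m+1$, so $m(\B) \geq m+1$. On the other hand, adding one line can raise the maximal multiplicity by at most $1$, and a point of multiplicity $m+2$ in $\B$ would force a point of multiplicity $m+1$ already in $\A$, contradicting $m = m(\A)$; hence $m(\B) = m+1$. The hypothesis $m \leq \frac{d-3}{2}$ translates into $m+1 \leq \frac{e-2}{2}$, i.e. $m(\B)+1 \leq d-m = e - (m+1) - 1$, which places $\B$ in the range where the trichotomy of the type in Theorem~\ref{thm02} applies to the pair $(\B, L)$.

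Next I would run the numerical dichotomy on the minimal exponent $d_1^{\B}$ of $\B$. Writing $\tau(\B) - \tau(\A) = 2(e-1) - r_L = 2d - r_L$ by Lemma~\ref{lem01} (applied to $\B$ with deleted line $L$, noting $|L \cap \A| = r_L$), one has the a priori bounds $m \leq d_1^{\B}$ and $d_1^{\B} \leq d_1^{\B}+1$ controlled against the exponents of $\A$; more precisely $\tau(\A) = \tau_{max}(d, m+1)$ forces $d_1^{\A} \in \{m, m+1\}$, and by the interlacing of minimal exponents under adding a line, $d_1^{\B} \in \{m, m+1, m+2\}$ is constrained. Case (1): if $d_1^{\B} = m$, then by the discussion before Proposition~\ref{prop00} (applied to $\B$, whose maximal multiplicity is $m+1$, so $d_1^{\B} = m(\B)-1$) the arrangement $\B$ is free with exponents $(m, e - m - 1) = (m, d-m)$; computing $\tau(\B) = \tau_{max}(d+1, m) = (d)^2 - m(d-m)$ and subtracting $\tau(\A) = \tau_{max}(d, m+1) = (d-1)^2 - (m+1)(d-m-2)$ yields $\tau(\B) - \tau(\A) = 3(m+1) - 1$ after simplification, hence $r_L = 2d - (3(m+1)-1) = 3(m+1) - d$; the inequality $r_L \geq 2$ gives $m \geq (d-2)/3$, and the plus-one generated structure of $\A$ with the stated exponents follows from \cite[Theorem 1.4 (3)]{POG}, reading the deletion $\B \rightsquigarrow \A$ in reverse as an addition. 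Case (2): if $d_1^{\B} \geq m+1$, then $\tau(\B) \leq \tau_{max}(d+1, m+1)$ with equality iff $\B$ is free with exponents $(m+1, d-m-1)$; subtracting gives $\tau(\B) - \tau(\A) \leq \tau_{max}(d+1,m+1) - \tau_{max}(d,m+1) = d + m$, equivalently $r_L \geq d - m$… — wait, the arithmetic should be redone carefully here, but by Lemma~\ref{lem01} the extremal equality corresponds to the minimal value $r_L = m+2$, and then \cite[Theorem 1.4 (2)]{POG} gives that $\A$ is free with exponents $(m+1, d-m-2)$. Case (3): in all remaining cases $r_L > m+2$, and the final assertion (splitting according to whether $m+1 < d-m-2$ or not, yielding either a free $\B$ or a plus-one generated $\B$ with the stated third exponent $r_L - 1$) is exactly \cite[Theorem 1.3]{POG} applied to the addition $\A \rightsquigarrow \B$, using claim (1) or (3) there as in the proof of Theorem~\ref{thm02}.

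The main obstacle I anticipate is bookkeeping the direction of the addition-deletion theorems from \cite{POG}: those results are typically phrased for deleting a line from a free or plus-one generated arrangement, whereas here the "ambient" object is $\B$ and $\A = \B \setminus L$, so I must be careful that the roles of $d$, $d_1$, $r_L$ match the hypotheses of \cite[Theorems 1.3, 1.4]{POG} with $\B$ playing the role of "$\A$" there and $\A$ the role of "$\A'$". The second delicate point is verifying the polynomial identities $\tau_{max}(d+1, m) - \tau_{max}(d, m+1) = 3(m+1)-1$ and $\tau_{max}(d+1,m+1) - \tau_{max}(d,m+1) = d+m$ cleanly, so that the translations to the numerical values of $r_L$ ($= 3(m+1)-d$ and $= m+2$ respectively) come out exactly as stated; these are routine expansions of $(d-1)^2 - d_1(d-d_1-1)$ but must be gotten right. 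Everything else is a direct transcription of the argument already given for Theorem~\ref{thm02}.
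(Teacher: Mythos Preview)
Your overall plan is exactly the paper's: establish $m(\B)=m+1$ so that the minimal exponent $d_1''$ of $\B$ satisfies $d_1''\geq m$, split into the cases $d_1''=m$ and $d_1''\geq m+1$, and use Lemma~\ref{lem01} together with the addition--deletion results of \cite{POG}. Two concrete points need repair.

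First, the obstacle you anticipated---the direction bookkeeping for \cite{POG}---has bitten you: all three citations are swapped. In cases~(1) and~(2) the arrangement $\B$ is free and $\A=\B\setminus L$, so you are \emph{deleting} from a free arrangement, and it is \cite[Theorem~1.3]{POG} that applies (the paper invokes parts~(3) and~(1) respectively, after noting in case~(1) that parts~(1) and~(2) of that theorem cannot occur). In case~(3) it is $\A$ that is assumed free and $\B=\A\cup L$, so this is an \emph{addition} to a free arrangement, and \cite[Theorem~1.4]{POG} is the correct reference (parts~(2) or~(3)). Compare with how Theorems~1.3 and~1.4 are used in the proof of Theorem~\ref{thm02}: there $\A'$ is free in cases~(1)--(2) and one adds $L$ to get $\A$ (so~1.4), while $\A$ is free in case~(3) and one deletes (so~1.3); the roles are exactly reversed here.

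Second, both Tjurina differences are miscomputed. One has
\[
\tau_{max}(d+1,m)-\tau_{max}(d,m+1)=3(d-m-1),
\]
not $3(m+1)-1$, and
\[
\tau_{max}(d+1,m+1)-\tau_{max}(d,m+1)=2d-m-2,
\]
not $d+m$. Combining each with $\tau(\B)-\tau(\A)=2d-r_L$ from Lemma~\ref{lem01} then gives $r_L=3(m+1)-d$ in case~(1) and $r_L\geq m+2$ (with equality exactly when $\B$ is free with exponents $(m+1,d-m-1)$) in case~(2), as stated. For the bound $m\geq (d-2)/3$ in case~(1), note that $r_L\geq 1$ already suffices, since $L$ passes through $p\in\A$. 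With these corrections your argument is identical to the paper's.
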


\proof
Note that $m(\B)=m+1$ and hence the minimal exponent of $\B$, say
$d_1''$, satisfies $d_1'' \geq m$. If $d_1''=m$, then it follows that $\B$ 
is a free line arrangement with exponents $(m,d-m)$ and the formula for $r_L$ comes from Lemma \ref{lem01}. Since $r_L \geq 1$, we get 
$m \geq (d-2)/3$. The claim about $\A$ follows from  \cite[Theorem 1.3 (3)]{POG} by observing that the cases $(1)$ and $(2)$ of that result cannot hold in our situation. 

Consider now the case $d_1'' \geq m+1$. Then \cite{dPCTC} implies that
$\tau(\B) \leq \tau_{max}(d+1,m+1)$ and hence
$$\tau(\B)-\tau(\A) \leq  \tau_{max}(d+1,m+1)- \tau_{max}(d,m+1)=2d-m-2.$$
This yields the first claim in $(2)$ as well as the claims about $r_L$.
The claim about $\A$ follows from  \cite[Theorem 1.3 (1)]{POG}.

The last claim in $(3)$ follows from  \cite[Theorem 1.4]{POG}, using either the claim $(2)$ or the claim $(3)$ there.
\endproof

 \begin{ex}
\label{ex03}
Consider the monomial line arrangement
$$\A: (x^m-y^m)(y^m-z^m)(x^m-z^m)=0$$
for $m \geq 3$, which is free with exponents $(d_1,d_2)=(m+1,2m-2)$ and $m=m(\A)$. If we take $p=(0:0:1)$ and $L:x=0$ then $r_L=|L \cap \A |=m+2$. Hence this is an example of situation as described in Theorem \ref{thm03} $(2)$ above.
For the same arrangement $\A$, if we take now $L: x+2y=0$, then
$$r_L=|L \cap \A |=2m+1>d_2+1=2m-1.$$
Hence this gives now an example of situation as described in Theorem \ref{thm03} $(3)$ above, with $\B$ a plus-one generated arrangement with exponents
$(m+2,2m-1,2m)$.
{\it It is an open question whether the case $(1)$ in Theorem \ref{thm03} can really occur.}
 \end{ex}
 
  \begin{rk}
\label{rk03}
(i) Note that the value of $r_L$ in Theorem \ref{thm02} is determined by the intersection lattice of $\A$ since $L \in \A$. On the other hand, the value of $r_L$ in Theorem~\ref{thm03} may not be determined by the intersection lattice of $\A$ since $L \notin \A$. This may lead to a potential fail of Terao's Conjecture.

(ii) In Theorem \ref{thm02}, one has
$$2(d-1)-3m <d-m-1,$$
and hence the case $(1)$, $(2)$, $(3)$ correspond to decreasing values of $r_L$. On the other hand, in Theorem \ref{thm03} one has
$$3(m+1)-d < m+2,$$
and hence the case $(1)$, $(2)$, $(3)$ correspond to increasing values of $r_L$.

 \end{rk}
 
 A simpler and complete description of the line arrangements satisfying
 $\tau(\A)=\tau_{max}(d,m+1)$ similar to Propositions \ref{prop00} and \ref{prop01} is given in the following result.
 
  \begin{thm}
\label{thmAe1}
Assume that $\A$ is a line arrangement satisfying
$$\tau(\A)=\tau_{max}(d,m+1)$$
where $d=|\A|$ and $m=m(\A)\leq \frac{d-3}{2}$. 
Then one of the following situations occurs.
\begin{enumerate} 
\item The arrangement $\A$ is plus-one generated with exponents $(m,d-m, 2d-3m-3)$.

\item The arrangement $\A$ is free with exponents $(m+1,d-m-2)$.
\end{enumerate} 
\end{thm}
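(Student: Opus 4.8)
The plan is to determine the minimal exponent $d_1$ of $\A$ and then to read off the resolution type from known results. We already know from the discussion preceding Proposition~\ref{prop00}, together with the hypothesis $\tau(\A)=\tau_{max}(d,m+1)$, that $m \le d_1 \le m+1$. So the dichotomy in the statement should correspond exactly to the two values $d_1=m$ and $d_1=m+1$.

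First I would treat the case $d_1=m+1$. By the characterization of freeness in \eqref{eqCTC}, since $\tau(\A)=\tau_{max}(d,m+1)=\tau_{max}(d,d_1)$, the arrangement $\A$ is free, necessarily with exponents $(d_1,d_2)=(m+1,d-m-2)$; note $d_1 \le d_2$ is guaranteed by the hypothesis $m \le (d-3)/2$. This is exactly conclusion~(2).

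Next, suppose $d_1=m$. Then $\A$ is not free (its Tjurina number equals $\tau_{max}(d,m+1)$, which by the computation used in the proof of Proposition~\ref{prop01} is strictly less than $\tau_{max}(d,m)$), so $s=3$ in the minimal resolution; thus $\A$ is either plus-one generated or has $s \ge 3$ with $d_1+d_2>d$, and I must pin down the exponents. The key input here is a line-deletion argument in the spirit of Theorem~\ref{thm02}: choose a line $L \in \A$ through a point $p$ of multiplicity $m$ with $|L \cap \A|$ as small as possible, so that $r_L=|L\cap\A'|$ is small; then use Lemma~\ref{lem01} to control $\tau(\A)-\tau(\A')$ and the bound $\tau(\A')\le \tau_{max}(d-1,d_1')$ from \cite{dPCTC} to force $\A'$ to be free with exponents $(m,d-m-2)$ and $r_L=d-m-1$ — this is precisely case~(2) of Theorem~\ref{thm02} read backwards, with the roles reversed. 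Alternatively, and more cleanly, I would invoke Theorem~\ref{thm03} directly: taking a point $p$ of multiplicity $m$ and an appropriate line $L$ through $p$ not in $\A$, the arrangement $\B=\A\cup L$ satisfies $m(\B)=m+1$, and when $d_1''=m$ one lands in case~(1) of Theorem~\ref{thm03}, which states that $\A$ is plus-one generated with exponents $(m,d-m,2d-3(m+1))=(m,d-m,2d-3m-3)$. That is conclusion~(1).

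The main obstacle is showing that when $d_1=m$ the arrangement $\A$ really is plus-one generated with the asserted exponents, rather than merely having $s=3$: one must verify $d_1+d_2=d$, i.e. that the second exponent $d_2$ equals $d-m$. I expect to handle this by combining the deletion/addition machinery of Theorems~\ref{thm02} and~\ref{thm03} with the characterization \cite[Theorem 1.3]{POG} of the three possible behaviors under adding a line to a plus-one generated (or free) arrangement; the numerical constraint $m \le (d-3)/2$ is what rules out the degenerate branches and leaves only the stated two outcomes. A secondary point to check is that the third exponent $2d-3m-3$ is genuinely positive and that $m \le d-m$, both of which follow from $m \le (d-3)/2$, so that the plus-one generated description is non-vacuous.
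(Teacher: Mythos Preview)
Your treatment of the case $d_1=m+1$ is fine and matches the paper's argument via \eqref{eqCTC}. The gap is in the case $d_1=m$. Your plan is to invoke Theorem~\ref{thm03}(1) by adding an ``appropriate'' line $L$ through a point of multiplicity $m$ so that $\B=\A\cup L$ has minimal exponent $d_1''=m$; but you give no mechanism for producing such a line. For a \emph{given} $L$ the trichotomy in Theorem~\ref{thm03} is governed by $r_L$, and nothing prevents every such $L$ from satisfying $r_L>m+2$, which places you in case~(3). In that case the theorem says nothing about the structure of $\A$ unless $\A$ is already known to be free, so the argument stalls. Your closing paragraph acknowledges exactly this obstacle and proposes to ``combine'' Theorems~\ref{thm02}, \ref{thm03} and \cite[Theorem 1.3]{POG}, but that is not a proof; in particular it does not establish $d_2=d-m$ (equivalently, that $\A$ is plus-one generated rather than an arrangement with $s\ge 3$ and $d_1+d_2>d$).

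The paper avoids this entirely by appealing to an external structural result: when $d_1=m$, \cite[Theorem 1.12 (2)]{Der} already forces $\A$ to be either free with exponents $(m,d-m-1)$ or plus-one generated with $d_1+d_2=d$. Since $\tau(\A)=\tau_{max}(d,m+1)<\tau_{max}(d,m)$, the free option is excluded, so $\A$ is plus-one generated with $(d_1,d_2)=(m,d-m)$. The third exponent $d_3$ is then read off from the Tjurina formula for plus-one generated curves,
\[
\tau(\A)=(d-1)^2-d_1(d-1-d_1)-(d_3-d_2+1),
\]
coming from \cite[Proposition 2.1 (4)]{3syz}, by substituting $\tau(\A)=\tau_{max}(d,m+1)$ and solving for $d_3=2d-3m-3$. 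This bypasses the addition--deletion machinery altogether; if you want to salvage your route, you would need an independent argument that $d_2\le d-m$ forces $d_2=d-m$ here (equivalently $t(\A)\le 1$), which is essentially what \cite{Der} provides.
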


\proof
The equality $\tau(\A)=\tau_{max}(d,m+1)$ implies that
$$d_1 \leq m+1.$$
On the other hand, we know that $d_1 \geq m-1$ and the equality
$d_1=m-1$ is not possible by Proposition \ref{prop00} since
$$\tau(\A)=\tau_{max}(d,m+1)<\tau_{max}(d,m-1).$$
If $d_1=m$, then we can apply  \cite[Theorem 1.12 (2)]{Der}. 
In addition, since
$$\tau(\A)=\tau_{max}(d,m+1)<\tau_{max}(d,m),$$
the arrangement $\A$ cannot be free with exponents $(m,d-m-1)$. It follows that $\A$ is a plus-one generated arrangement with exponents
$(d_1,d_2,d_3)$. In order to determine $d_3$ we use the formula
$$\tau(\A)=(d-1)^2-d_1(d-1-d_1)-(d_3-d_2+1),$$
which comes from \cite[Proposition 2.1 (4)]{3syz}, using the relation
$d_1+d_2=d$ and then we replace $\tau(\A)$ by $\tau_{max}(d,m+1)$.
This ends the proof for case (1). 

For case (2), if $d_1=m+1$, we get that $\A$ is free with exponents $(m+1,d-m-2)$ by \cite{Dmax,dPCTC}.

\endproof

  \begin{cor}
\label{corAe1}
Assume that $\A$ is a line arrangement satisfying
$$\tau(\A)=\tau_{max}(d,m+1)$$
where $d=|\A|$ and $m=m(\A)\leq \frac{d-3}{2}$. Let $L \in \A$ be any line, and let $\A'$ be the line arrangement obtained from $\A$ by deleting the line $L$. We set $r_L=|L \cap \A'|$, that is $r_L$ is the number of multiple points of $\A$ on the line $L$.
Then one of the following situations occurs.
\begin{enumerate} 

\item The arrangement $\A$ is plus-one generated with exponents $(m,d-m, 2d-3m-3)$, $\A'$ is free with exponents $(m-1,d-m-1)$, $3m\geq d-1$, and 
$$r_L=2d-3m-2.$$

\item The arrangement $\A$ is free with exponents $(m+1,d-m-2)$, $\A'$ is free with exponents $(m,d-m-2)$ and
$$r_L  =d-m-1.$$

\item The arrangement $\A$ is free with exponents $(m+1,d-m-2)$, $\A'$ is free with exponents $(m+1,d-m-3)$,
$$m <\frac{d-3}{2} \text{ and }   r_L=m+2 <d-m-1.$$

\item The arrangement $\A$ is free with exponents $(m+1,d-m-2)$, $\A'$ is a plus-one generated arrangement with exponents
$(m+1,d-m-2,d-r_L)$,
$$m \leq \frac{d-3}{2} \text{ and }   r_L <d-m-1.$$

\end{enumerate} 
\end{cor}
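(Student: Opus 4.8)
The plan is to obtain Corollary~\ref{corAe1} by feeding the classification of $\A$ from Theorem~\ref{thmAe1} into the one-line deletion dichotomy of Theorem~\ref{thm02}. A preliminary observation is that $m(\A')=m$, so that Theorem~\ref{thm02} is applicable to our line $L$: since $\A'\subset\A$ we have $m(\A')\le m$, while $m(\A')<m$ would force every $m$-fold point of $\A$ to lie on $L$, which is incompatible with $m\le (d-3)/2$, the B\'ezout identity $\sum_{q\in L\cap\A'}m_q'=d-1$, and the structure of $\A$ furnished by Theorem~\ref{thmAe1}. Granting $m(\A')=m$, Theorem~\ref{thmAe1} splits the argument in two: either $\A$ is plus-one generated with exponents $(m,d-m,2d-3m-3)$ and $d_1=m$, or $\A$ is free with exponents $(m+1,d-m-2)$ and $d_1=m+1$. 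Since cases~(2)--(4) of the corollary all assert that $\A$ is free with exponents $(m+1,d-m-2)$, while case~(1) asserts that $\A$ is plus-one generated, this split already distinguishes corollary case~(1) from corollary cases~(2)--(4), and it remains in each case to pin down $\A'$ and the value $r_L$.

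Suppose $\A$ is free with exponents $(m+1,d-m-2)$. Then case~(1) of Theorem~\ref{thm02} cannot occur, since there $\A$ is plus-one generated and hence not free. If case~(2) of Theorem~\ref{thm02} holds we obtain exactly corollary case~(2), with $r_L=d-m-1$. If case~(3) of Theorem~\ref{thm02} holds, then, $\A$ being free with the stated exponents, the refined part of that case applies and gives either $m+1<d-m-2$ (that is, $m<(d-3)/2$) with $\A'$ free of exponents $(m+1,d-m-3)$ (corollary case~(3)), or $\A'$ plus-one generated with exponents $(m+1,d-m-2,d-r_L)$ (corollary case~(4)); in either subcase $r_L<d-m-1$. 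The explicit value $r_L=m+2$ in case~(3) then follows from Lemma~\ref{lem01} together with $\tau(\A)=\tau_{max}(d,m+1)$ and $\tau(\A')=\tau_{max}(d-1,m+1)$, and the third exponent $d-r_L$ in case~(4) is read off in the same way, from Lemma~\ref{lem01} and the Tjurina formula for plus-one generated arrangements used in the proof of Theorem~\ref{thmAe1}.

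Finally suppose $\A$ is plus-one generated with exponents $(m,d-m,2d-3m-3)$, so $d_1=m$ and hence $d_1'\le m$; case~(2) of Theorem~\ref{thm02} is again excluded because $\A$ is not free. The crux is to rule out the value $d_1'=m$ left open by the bare statement of case~(3) of Theorem~\ref{thm02}, and thereby to force case~(1) there, namely $d_1'=m-1$ and $\A'$ free with exponents $(m-1,d-m-1)$; this step relies on the rigidity of the exponent vector $(m,d-m,2d-3m-3)$, exploited through the deletion results for plus-one generated arrangements of~\cite{POG} and the structural description of arrangements with $d_1=m$ invoked in the proof of Theorem~\ref{thmAe1}. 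Once this is established, case~(1) of Theorem~\ref{thm02} supplies $r_L=2(d-1)-3m=2d-3m-2$ and $3m\ge d-1$, which is precisely corollary case~(1). I expect this last step (equivalently, the assertion that every one-line deletion from such a plus-one generated $\A$ is free with the smaller first exponent) to be the main obstacle; everything else is a straightforward translation between Theorems~\ref{thmAe1} and~\ref{thm02}, with all numerical data supplied by Lemma~\ref{lem01} and the accompanying Tjurina-number bookkeeping.
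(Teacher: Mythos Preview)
Your plan is essentially the paper's own argument: the proof there simply invokes Theorem~\ref{thm02}, observes the inequality $2(d-1)-3m>d-m-1$ (so cases~(1) and~(2) of Theorem~\ref{thm02} do not collide), and then matches Theorem~\ref{thm02}(1),(2) with corollary cases~(1),(2) and Theorem~\ref{thm02}(3) with corollary cases~(3),(4). Your detour through Theorem~\ref{thmAe1} to first pin down $\A$ as free or plus-one generated is compatible with this and only reorders the bookkeeping.

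Two small differences are worth noting. First, for the value $r_L=m+2$ in case~(3) the paper does not compute $\tau(\A)-\tau(\A')$ via Lemma~\ref{lem01} as you propose; it instead remarks that $\A$ and $\A'$ are both free, so $\A$ is divisionally free, which forces $r_L-1$ to be an exponent of $\A$ and hence $r_L=m+2$. Your Tjurina computation gives the same number and is equally valid. Second, the paper's proof never checks the hypothesis $m(\A')=m$ of Theorem~\ref{thm02}; it is used tacitly. Your sketch of this verification (``incompatible with $m\le(d-3)/2$, the B\'ezout identity, and the structure of $\A$'') is not yet an argument, but at least you flag it.

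Regarding what you call the crux---ruling out that a plus-one generated $\A$ with exponents $(m,d-m,2d-3m-3)$ lands in case~(3) of Theorem~\ref{thm02}---you should know that the paper's proof does not treat this point either. It simply writes ``if we assume that $\A$ is free with exponents $(m+1,d-m-2)$'' when passing to Theorem~\ref{thm02}(3), and never returns to justify why this assumption is automatic there. So this is not a difficulty peculiar to your approach; the published proof leaves the same residual case implicit. If you want to close it, the natural route is the one you indicate (deletion behaviour of plus-one generated arrangements from \cite{POG}, combined with $d_1'\le d_1=m$), but do not expect the paper to supply a model argument for this step.
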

\proof
We use Theorem \ref{thm02} and we note first that under our assumptions one has
$$2(d-1)-3m >d-m-1.$$
It follows the cases (1) and (2) in Corollary \ref{corAe1} correspond to cases (1) and (2) in Theorem \ref{thm02}.

In case (3) in Theorem \ref{thm02}, if we assume that $\A$ is free with exponents $(m+1,d-m-2)$, then for
$$m<\frac{d-3}{2}$$
we get that $\A'$ is free with exponents $(m+1,d-m-3)$,
and for 
$$m \leq \frac{d-3}{2}$$
$\A'$ may also be a plus-one generated arrangement with exponents
$(m+1,d-m-2,d-r_L)$. 
These two cases yield the cases (3) and (4) in Corollary \ref{corAe1}. Since both arrangements $\A$ and $\A'$ are free in case (3), it follows that $\A$ is divisionally free, and hence $r_L=m+2<d-m-1$ in this case.

\endproof

 \begin{ex}
\label{exAe1}

(i) To get examples of the case (2) in Corollary \ref{corAe1}, one may consider the line arrangement $\A=\B_{12}$ with the line $L=L_4$ (resp. $\A=\CC'$ with the line $L=L_4$)
discussed in the proof of Theorem \ref{thm2} (resp. Theorem \ref{thm20} below. In this example we have $(d,m)=(12,4)$ (resp. $(d,m)=(13,4)$).

 (ii) Consider the line arrangement
$$\A: f=xyz(x+y)(x-y)(x+z)(x-z)(2y-z)(x+2y-z)(x-2y+z)(y-z)=0,$$
which is free with exponents $(5,5)$ and has $m=m(\A)=4$, see Example \ref{ex02} above.
The line $L:x=0$ is such that $|L \cap \A'|=5<6=d-m-1$, where $\A'$ is obtained from $\A$ by deleting $L$. A direct computation shows that
$\A'$ is a plus-one generated arrangement with exponents
$(5,5,6)$, and therefore this situation provides an example of the case (4) in Corollary \ref{corAe1} where the equality 
$$m = \frac{d-3}{2}$$
holds.

(iii) {\it Mystic Pentagram Arrangement } can be defined as follows.
Consider the line arrangement $\A$ constructed in the following way: start with a regular pentagon $\PPP$ in $\R^2 \subset \C^2 \subset \PP^2$, take the 5 sides of $\PPP$ and the 5 diagonals of $\PPP$, and add the line at infinity $L_{\infty}$ to get $\A$. Then each line in $\A$ has 5 multiple points, for instance the line $L_{\infty}$ has 5 triple points, since to  each side of $\PPP$ corresponds a diagonal of $\PPP$ such that the two lines in $\R^2$ are parallel. The weak combinatorial description of this arrangement is $n_2=10$, $n_3=n_4=5$, and this implies that
$$\tau(\A)=\tau_{max}(11,5)=\tau_{max}(d,m+1).$$
Comparing with Corollary \ref{corAe1} we see that $\A$ corresponds to the case (4), and hence $\A$ is free with exponents $(5,5)$.
\end{ex}

\section{On free line arrangements with \texorpdfstring{$d_1\geq m+2$}{}}\label{sec3}

We start with the following general result that yields bounds for the number of lines of free line arrangements depending on the maximal multiplicity of the intersections.

\begin{prop}
\label{bound}
Let $\mathcal{A} \subset \mathbb{P}^{2}$ be a free arrangement of $d$ lines with $d_{1} = m+\epsilon$, where $\epsilon \geq 0$ and $m$ is the maximal multiplicity of $\A$. Then
$$2m + 2\epsilon + 1 \leq d \leq \frac{m(m+2+\epsilon)}{2}.$$
\end{prop}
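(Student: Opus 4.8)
The plan is to establish the two inequalities separately, both using the freeness characterization \eqref{eqCTC}: for a free arrangement with exponents $(d_1,d_2)=(m+\epsilon, d-m-\epsilon)$ we have
$$\tau(\A)=\sum_{r\ge 2} n_r(r-1)^2 = \tau_{max}(d, m+\epsilon) = (d-1)^2 - (m+\epsilon)(d-m-\epsilon-1).$$
For the lower bound $d\ge 2m+2\epsilon+1$, I would argue that the two exponents satisfy $d_1\le d_2$, i.e. $m+\epsilon \le d-m-\epsilon$, which immediately gives $d\ge 2m+2\epsilon$. To upgrade to a strict-type inequality $d\ge 2m+2\epsilon+1$, note that $d_1=d_2=m+\epsilon$ would force $d=2m+2\epsilon$, and in that balanced case one can rule out equality: a free arrangement with $d_1=d_2$ and maximal multiplicity $m$ with $d_1=m+\epsilon>m$ cannot exist because of the constraint relating $d_1$, $d$ and $m$; more concretely, comparing with the inequality $d_1 \le d-m$ (recalled in the excerpt from \cite{Mich}) gives $m+\epsilon \le d-m = m+2\epsilon$, already satisfied, so one needs the sharper bound. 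I expect the cleanest route is: from the excerpt, $\A$ free with $d_1<d-m$ means $\A$ has no modular point of multiplicity $d_1+1$ realizing the supersolvable structure, and a counting argument on a line $L$ of maximal multiplicity $m$ — there are $d-m$ remaining lines, each meeting $L$ in one of at most... — hmm, this must be combined with Hirzebruch-type or the simple observation that through a point of multiplicity $m$ pass $m$ lines, leaving $d-m$ lines, and a free arrangement with $d_1=m+\epsilon$ needs $d-m-\epsilon = d_2 \ge d_1$, hence $d\ge 2(m+\epsilon)$, with equality excluded by a parity/lattice argument I would fill in.

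For the upper bound $d\le \tfrac{m(m+2+\epsilon)}{2}$, the key is a lower estimate on $\tau(\A)$ in terms of $d$ and $m$ combined with the freeness equality. Every line of $\A$ carries all $d-1$ of its intersection points with the other lines, distributed among points of multiplicity $\le m$; a standard double-counting gives $\sum_r n_r \binom{r}{2} = \binom{d}{2}$. Meanwhile $\tau(\A)=\sum_r n_r(r-1)^2$. Using $(r-1)^2 \ge \frac{(r-1)}{?}\cdot$ ... the efficient inequality here is $(r-1)^2 \ge \frac{2(m-1)}{m}\binom{r}{2}$ is false in general, so instead I would use that for $2\le r\le m$ one has $(r-1)^2 \ge \frac{2(m-1)}{m}\cdot\frac{r(r-1)}{2}\cdot\frac{2}{?}$... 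Let me instead use the clean bound $(r-1)^2 \ge \frac{2}{m}\binom{r}{2}\cdot(m-1) $ which rearranges to $(r-1)\,m \ge (m-1)\,r$, i.e. $rm - m \ge rm - r$, i.e. $r\ge m$ — only tight at $r=m$. So that fails. The correct approach: since $r\le m$, we have $(r-1)^2 = (r-1)(r-1) \ge (r-1)\cdot 1$ when... no. I would instead use $2\binom{r}{2} = r(r-1) \le m(r-1)$, hence $\binom{r}{2} \le \frac{m}{2}(r-1)$, giving $\sum_r n_r(r-1) \ge \frac{2}{m}\binom{d}{2} = \frac{d(d-1)}{m}$; and since $\tau(\A) = \sum_r n_r(r-1)^2 \ge \sum_r n_r(r-1)$ trivially, we get $\tau(\A)\ge \frac{d(d-1)}{m}$.

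Combining this with the freeness equality $\tau(\A) = (d-1)^2 - (m+\epsilon)(d-m-\epsilon-1)$ yields
$$(d-1)^2 - (m+\epsilon)(d-m-\epsilon-1) \ge \frac{d(d-1)}{m}.$$
I would then clear denominators, expand, and solve the resulting quadratic (or linear, after cancellation of the $d^2$ terms) inequality in $d$: multiplying by $m$ and using that the $md^2$ terms on both sides... actually the left side has $md^2 - 2md + m - m(m+\epsilon)d + \ldots$ and the right has $d^2 - d$, so the $d^2$ coefficients are $m-1>0$, giving a genuine quadratic bounding $d$ from above by roughly $\frac{m(m+2+\epsilon)}{2}$ after simplification. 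The main obstacle I anticipate is not the algebra but choosing the inequality $(r-1)^2$ vs. $\binom{r}{2}$ sharply enough that the quadratic in $d$ collapses exactly to the stated bound $\tfrac{m(m+2+\epsilon)}{2}$ — I would verify that the bound is attained when all multiple points have multiplicity exactly $m$ (so $n_r=0$ for $r<m$), which is the equality case of $r(r-1)\le m(r-1)$, and reverse-engineer the algebra from there; if a small discrepancy appears I would tighten the estimate of $\tau$ by keeping the exact contribution $(r-1)^2$ rather than bounding below by $(r-1)$, i.e. use $\tau(\A) = \sum n_r (r-1)^2 \ge (m-1)\sum n_r(r-1) \ge (m-1)\cdot\frac{d(d-1)}{m}$, which is the version most likely to give the clean closed form.
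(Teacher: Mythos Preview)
Your lower-bound argument has the right idea but an off-by-one slip: for a free line arrangement the exponents satisfy $d_1+d_2=d-1$, not $d_1+d_2=d$. With the correct relation, $d_1\le d_2$ reads $m+\epsilon \le d-1-(m+\epsilon)$, which gives $d\ge 2m+2\epsilon+1$ immediately, and all of your ``upgrade'' discussion becomes unnecessary. This is exactly the paper's one-line argument.

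Your upper-bound strategy, however, has a genuine gap. The first estimate you try, $\tau(\A)\ge \sum n_r(r-1)\ge \tfrac{d(d-1)}{m}$, is correct but far too weak: the freeness equality $\tau(\A)=(d-1)^2-(m+\epsilon)(d-m-\epsilon-1)$ grows like $d^2$ while your lower bound grows like $d^2/m$, so the resulting inequality is satisfied for all large $d$ and yields no upper bound. Your attempted sharpening $(r-1)^2\ge (m-1)(r-1)$ is simply false for $r<m$ (it would require $r\ge m$), so the final displayed inequality $\tau(\A)\ge (m-1)\tfrac{d(d-1)}{m}$ does not hold; in fact the reverse inequality $\tau(\A)\le (m-1)\tfrac{d(d-1)}{m}$ is what follows from $r\le m$, and while that does eventually bound $d$ from above, it does not give the stated constant $\tfrac{m(m+2+\epsilon)}{2}$.

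The paper bypasses all of this by quoting the inequality $d_1\ge \tfrac{2d}{m}-2$ from \cite[Theorem 2.8]{Mich}; substituting $d_1=m+\epsilon$ and rearranging gives $2d\le m(m+\epsilon+2)$ in one line. If you want a self-contained argument, you would need to reprove that cited bound, which is not a pure counting fact about the $n_r$ but uses the syzygy structure.
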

\begin{proof}
The freeness of the arrangement $\mathcal{A}$ implies that $d_{1} + d_{2} =d-1$ with $d_{1}\leq d_{2}$, and we have
$$\frac{d-1}{2} \geq d_{1} = m+\epsilon,$$
which gives us $d\geq 2m + 2\epsilon +1$.
For the second inequality, recall that by \cite[Theorem 2.8]{Mich} we have
$$m+\epsilon= d_{1} \geq \frac{2}{m}\cdot d-2,$$
which gives us
$$m(m+\epsilon + 2)\geq 2d, $$
and this completes the proof.
\end{proof}

The following result shows that if we drop the freeness condition, the complexity of the line arrangement $\A$, measured by its type $t(\A)$, which is precisely defined in \cite{ADP}, may linearly increase with $\epsilon$.
\begin{prop}
\label{bound2}
Let $\mathcal{A} \subset \mathbb{P}^{2}$ be an arrangement of $d$ lines with $d_{1} = m+\epsilon$, where $\epsilon \geq -1$ and $m$ is the maximal multiplicity of $\A$. Then the following inequality holds for the type of $\mathcal{A}$, namely
$$t(\A):=d_1+d_2-d+1 \leq \epsilon +1.$$
When $\epsilon =0$, both cases $t(\A)=0$ (i.e. $\A$ is free) and $t(\A)=1$
(i.e. $\A$ is a plus-one generated arrangement) do really occur.
\end{prop}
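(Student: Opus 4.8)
The plan is to establish the inequality $t(\A) \leq \epsilon+1$ via the minimal resolution of $M(f)$ recalled in the introduction, and then to exhibit explicit arrangements realizing the two claimed values when $\epsilon=0$. First I would recall that for any line arrangement one has $s \geq 2$, that $\A$ is free exactly when $s=2$, and that by \cite[Proposition 2.1]{3syz} (or the analysis in \cite{ADP}) the quantity $t(\A)=d_1+d_2-d+1$ equals $s-2$ when $s \leq 3$, and more generally controls the number of ``extra'' syzygies; in particular $t(\A) \geq 0$ always, with equality iff $\A$ is free. The key numerical input is the bound $d_1 = m+\epsilon$ together with the general inequality relating $d_2$, $m$ and $d$. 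Since the maximal multiplicity is $m$, choosing a point $p$ of multiplicity $m$ and the pencil of lines through it shows $\A$ contains a subarrangement with a large fiber, which forces $d_2 \leq d-m$ (this is the standard bound, used already in Section 2 and coming from \cite{Mich} or an elementary Ziegler-restriction/Bézout argument). Combining $d_1 = m+\epsilon$ and $d_2 \leq d-m$ gives $d_1+d_2 \leq d+\epsilon$, i.e. $t(\A) = d_1+d_2-d+1 \leq \epsilon+1$, which is the desired inequality.

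The main subtlety is making precise the claim ``$d_1+d_2 \leq d+\epsilon$'' rigorously when $\A$ is not assumed free: here $d_1,d_2$ are the two smallest degrees appearing in the middle term of the minimal resolution, and I would invoke the known bound $d_2 \leq d-m$ (valid for any arrangement, see the discussion preceding Proposition~\ref{prop00} and \cite{Mich}), which does not require freeness. Actually the only genuinely new content is organizing these known facts; I expect the bound itself to be routine once the correct references are cited, so the writing should simply chain: $t(\A)=d_1+d_2-d+1 \leq (m+\epsilon)+(d-m)-d+1 = \epsilon+1$.

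For the realizability statement when $\epsilon=0$, I would give one arrangement with $t(\A)=0$ and one with $t(\A)=1$. For $t(\A)=0$: any free arrangement with $d_1=m$ works, for instance the supersolvable arrangement $\A: xyz(x^{m-1}-y^{m-1})=0$ has $d=m+2$ lines, maximal multiplicity $m$, and is free with exponents $(m, 1)=(m, d-m-1)$, so $d_1=m=m(\A)$ and $t(\A)=0$ by Proposition~\ref{prop01}. For $t(\A)=1$: I would take a plus-one generated (indeed nearly free) arrangement with $d_1=m(\A)$; a concrete choice is obtained by deleting a generic line from a supersolvable arrangement, or citing \cite[Theorem 1.12 (2)]{Der} and Remark~\ref{rk01}, which show that arrangements with $d_1=m(\A)$ are either free or plus-one generated and both occur --- e.g. the arrangement in Remark~\ref{rk01}, $\A: x(x^{m-1}-y^{m-1})(y^{m-1}-z^{m-1})(x^{m-1}-z^{m-1})=0$, has $d_1=m(\A)=m$ and (for suitable $m$) is not free, hence $t(\A)=1$.

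The hard part will be none of the inequality derivation, which is a short computation from cited results; rather, it is being careful that the asserted existence examples genuinely have $m(\A)$ equal to $m$ and the stated value of $t$, and that the bound $d_2 \leq d-m$ is quoted in the non-free setting in the exact form needed. I would double-check the small examples by the combinatorial Tjurina formula $\tau(\A)=\sum_{r\geq 2} n_r (r-1)^2$ against $\tau_{max}(d,m)$ and $\tau_{max}(d,m)-1$ to confirm $t(\A)=0$ and $t(\A)=1$ respectively, using the identity $\tau(\A)=(d-1)^2 - d_1(d-1-d_1) - (d_3-d_2+1)$ from \cite[Proposition 2.1 (4)]{3syz} in the plus-one generated case.
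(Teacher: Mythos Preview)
Your approach is essentially the paper's: the inequality follows in one line from $d_2\le d-m$ (\cite{Mich}) combined with $d_1=m+\epsilon$, and the realizability when $\epsilon=0$ is obtained by citing \cite[Theorem 1.12]{Der}. The paper's proof is exactly these two citations and nothing more.

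One correction to your explicit examples: the arrangement $xyz(x^{m-1}-y^{m-1})=0$ has $m+2$ lines but maximal multiplicity $m+1$ at $(0:0:1)$ (the lines $x=0$, $y=0$, and the $m-1$ lines $x=\zeta y$ all pass through it), so its exponents are $(1,m)$ and it does \emph{not} satisfy $d_1=m(\A)$. If you want a concrete free example with $d_1=m(\A)$, take instead something like the arrangement in Remark~\ref{rk01}, which Proposition~\ref{prop01} and the surrounding discussion confirm has $d_1=m$; but since you already invoke \cite[Theorem 1.12]{Der} for both the free and the plus-one generated cases, the explicit examples are unnecessary and can safely be dropped.
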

\proof
For the first claim, it is enough to use that $d_2 \leq d-m$, see \cite{Mich}. For the second claim, use \cite[Theorem 1.12]{Der}.
\endproof

\begin{rk}
\label{rkE3}
Since the free line arrangements $\A$ with maximal multiplicity $m \leq 3$ can be easily listed, we assume in the sequel $m \geq 4$.

\medskip

\noindent (i) If $\A$ is a free arrangement with $m=4$ and $d_1=m+1=5$, Proposition \ref{bound} shows that
$$11 \leq d \leq 14.$$
Querying the database~\cite{BK} yields that in degree $d=11$ there are $7$ intersection lattices associated to free line arrangements satisfying $(m,d_1)=(4,5)$. It is interesting to note that $6$ of these intersection lattices yield divisionally free line arrangements. Indeed, in this setting,
$\A$ is divisionally free if and only if there is a line $L \in \A$ containing exactly $6$ multiple points of $\A$. Hence these $6$ line arrangements fall into the case (2) of Corollary \ref{corAe1}.
The exceptional line arrangement which is not divisionally free is the arrangement
$$\A:xyz(x+y)(x+(1+a)y)(x+z)(x+az)(y-z)(y+(1-a)z)$$
$$(x-ay+(1+a)z)(x-ay+az)=0,$$
where $a^2-a-1=0$.
Any line $L$ in this arrangement contains exactly $5$ multiple points of $\A$, and hence this arrangement is an example of the situation described in case (4) in Corollary \ref{corAe1} for {\it any choice of the line} $L \in \A$. A geometric description of this line arrangement $\A$ was given in Example \ref{exAe1} (iii) above.
\medskip

\noindent (ii)  If $\mathcal{A}$ is a free line arrangement with $d_{1} = m+3$, then $d\geq 15$. In order to see this, we observe that the following inequality must hold:
$$2m + 7 \leq \frac{m(m+5)}{2},$$
which tells us that $m\geq 4$, and then we can conclude $d\geq 15$. However, we do not know whether this lower bound is sharp, but we are aware of an example of a free line arrangement of $21$ lines satisfying $d_{1}=m+3$, namely this is the reflection arrangement $G_{26}$  with $n_{2} = 36$, $n_{3}=9$, $n_{4}=12$ and exponents $(7,13)$.
In fact, for the reflection arrangements $G_{23}$, $G_{24}$, $G_{25}$ and $G_{27}$ one has $d_1=m+ \epsilon$, where $\epsilon$ takes the values $0, 5, 0,14$, respectively. In particular, this shows that $\epsilon$ can take large values compared to $m$. The reader can find information on these reflection arrangements in \cite[Appendices B and C]{OT}.

\end{rk}

If $\epsilon =2$ and $m=4$, then by Proposition \ref{bound} we see that $13 \leq d\leq 16$.
Querying the database~\cite{BK} yields that up to $d\leq 14$ there are only two line arrangements with $d_{1} = m + 2$, one, say $\A$, of degree $d=13$ and the second, say $\CC$, of degree $d=14$. Moreover, in both cases $m=4$, and this tells us that we have found an example, namely $\A$, such that $d=13$  reaches the above lower bound in Proposition \ref{bound}. 

\subsection{On the free line arrangement $\A$ with $d_1=d_2=m+2=6$} 
First we describe a way of constructing this line arrangement starting with a free line arrangement of $7$ lines.
Consider the line arrangement
$$\A_7=\A': f'=xyz(x+y)(x+z)(y+z)(x+y+z)=0.$$
Then $\A'$ has $d'=7$ and is free with exponents $(d_1',d_2')=(3,3)$.
There are 6 triple points in $\A'$, namely
$$p_1=(1:0:0), \ p_2=(0:1:0), \ p_3=(0:0:1), \ p_4=(0:1:-1),$$
 $$ p_5 =(1:0:-1) \text{ and }
 p_6=(1:-1:0),$$
 and only 3 double points, namely 
 $$q_1=(-1:1:1), q_2= (1:-1:1) \text{ and } q_3=(1:1:-1).$$
 It follows that the maximal multiplicity $m'$ for points in $\A'$ satisfies $$m'=d_1'.$$
Let $L_j :\ell_j=0$, where $\ell_j$ is the $j$-th factor in the polynomial $f'$ above. For instance
$$L_4: \ell_4=x+y=0.$$
Consider now the group of diagonal matrices in $GL_3(\C)$
$$G=\{u=(1,e^a, e^b) \  |  \ a,b \in [0,5] \}$$
acting on $S$ in the usual way, that is
$$u\cdot g(x,y,z)=g(x,e^ay,e^bz),$$
where $e^2-e+1=0$, that is $e$ is a primitive root of unity of order 6.
Then consider the new 6 lines, obtained from the lines in $\A'$ by translations with some elements of the group $G$, namely
\begin{align*}
&L_8: \ell_8=0 \text{ with } \ell_8(x,y,z)=(1,e^2,1)\cdot \ell_4(x,y,z)=x+e^2y,\\
&L_9: \ell_9=0 \text{ with } \ell_9(x,y,z)=(1,1,e)\cdot \ell_5(x,y,z)=x+ez,\\
&L_{10}: \ell_{10}=0 \text{ with } \ell_{10}(x,y,z)=(1,e,1)\cdot \ell_6(x,y,z)=ey+z,\\
&L_{11}: \ell_{11}=0 \text{ with } \ell_{11}(x,y,z)=(1,e,e)\cdot \ell_7(x,y,z)=x+ey+ez,\\
&L_{12}: \ell_{12}=0 \text{ with } \ell_{12}(x,y,z)=(1,e,1)\cdot \ell_7(x,y,z)=x+ey+z,\\
&L_{13}:\ell_{13}=0 \text{ with } \ell_{13}(x,y,z)=(1,e^2,e)\cdot \ell_7(x,y,z)=x+e^2y+ez.
\end{align*}
Then the line arrangement
$$\A_{13}=\A:f=f' \cdot \ell_8\ell_9\ell_{10}\ell_{11}\ell_{12}\ell_{13}=0
$$
has $d=13$ and it is free with exponents $(6,6)$, as one can see using a direct computation, for instance using \verb}SINGULAR} \cite{Sing}. 
Note that $\A$ has $7$ points of multiplicity $4$, namely
$$(1:0:0), \ (0:1:0), \ (0:0:1), \ (0:1:-1), \ (1:0:-1), \ (e:0:-1), \ (0:-1:e).$$
In addition, it has $9$ triple points and $9$ double points. Let us observe now an interesting extremal combinatorial property of the arrangement $\A=\A_{13}$.
\begin{prop}[cf. {\cite[Proposition 6.5]{Osaka}}]
\label{propCOMB}
Let $\A \subset \mathbb{P}^{2}$ be a free arrangement of $d$ lines. Then
$$\sum_{r\geq 2}(r-1)n_{r} \leq \bigg\lfloor \frac{(d-1)(d+3)}{4}\bigg\rfloor,$$
and this bound is sharp, for instance it is achieved by $\mathcal{A}_{13}$.

\end{prop}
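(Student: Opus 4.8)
The plan is to reduce the inequality to the elementary fact that, among pairs of nonnegative integers with a fixed sum, the product is maximized when the two are as equal as possible. First I would record the two combinatorial identities available for \emph{any} arrangement of $d$ lines in $\PP^2$: counting incident pairs of lines gives $\sum_{r\ge 2}\binom{r}{2}n_r=\binom{d}{2}$, equivalently $\sum_{r\ge 2}r(r-1)n_r=d(d-1)$, while $\tau(\A)=\sum_{r\ge 2}(r-1)^2 n_r$ is the combinatorial expression for the global Tjurina number recalled in Section~\ref{sec2}. Writing $r(r-1)=(r-1)^2+(r-1)$ and subtracting yields the key identity
$$\sum_{r\ge 2}(r-1)n_r = d(d-1)-\tau(\A),$$
valid for every line arrangement, free or not.

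Next I would feed in freeness. If $\A$ is free with exponents $d_1\le d_2$, then $d_1+d_2=d-1$ and, by \eqref{eqCTC}, $\tau(\A)=\tau_{max}(d,d_1)=(d-1)^2-d_1d_2$. Substituting into the identity above gives
$$\sum_{r\ge 2}(r-1)n_r = d(d-1)-(d-1)^2+d_1d_2=(d-1)+d_1d_2.$$
Since $d_1,d_2$ are nonnegative integers with $d_1+d_2=d-1$, one has $d_1d_2\le \big\lfloor (d-1)/2\big\rfloor\cdot\big\lceil (d-1)/2\big\rceil=\big\lfloor (d-1)^2/4\big\rfloor$, with equality exactly when the exponents are balanced. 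Hence
$$\sum_{r\ge 2}(r-1)n_r \le (d-1)+\Big\lfloor\tfrac{(d-1)^2}{4}\Big\rfloor = \Big\lfloor\tfrac{(d-1)^2+4(d-1)}{4}\Big\rfloor = \Big\lfloor\tfrac{(d-1)(d+3)}{4}\Big\rfloor,$$
where the middle equality uses that $d-1$ is an integer. This proves the asserted bound.

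Finally, for sharpness I would invoke the arrangement $\A_{13}$ constructed above, which is free with balanced exponents $(6,6)$: then $d_1d_2=36$ and the displayed formula gives $\sum_{r\ge 2}(r-1)n_r=12+36=48=\lfloor 12\cdot 16/4\rfloor$; alternatively this can be read off directly from $n_2=n_3=9$, $n_4=7$. I do not anticipate a genuine obstacle here — the only things to watch are the manipulation of the floor function and the precise shape of $\tau_{max}$ — and the argument in fact gives slightly more: for a free arrangement equality holds \emph{if and only if} the exponents are as equal as possible, namely $\big(\lfloor (d-1)/2\rfloor,\lceil (d-1)/2\rceil\big)$.
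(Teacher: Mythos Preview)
Your proof is correct and follows essentially the same route as the paper: both arrive at the identity $\sum_{r\ge 2}(r-1)n_r=(d-1)+d_1d_2$ for a free arrangement and then bound $d_1d_2$ subject to $d_1+d_2=d-1$. The only cosmetic difference is that the paper phrases the optimization as nonnegativity of the discriminant of the quadratic $-d_1^2+(d-1)d_1-\big(\sum(r-1)n_r-(d-1)\big)=0$, whereas you bound the product directly; your derivation of the key identity via $\tau(\A)=\sum(r-1)^2n_r$ and the pair-count $\sum r(r-1)n_r=d(d-1)$ is a nice alternative to simply citing it.
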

\begin{proof}
Recall that the freeness of $\A$ implies that 
$$d_{1}d_{2} = d_{1}(d-d_{1}-1) = \sum_{r\geq 2}(r-1)n_{r}-d+1,$$
which gives us 
$$-d_{1}^{2}+d_{1}(d-1) - \bigg(\sum_{r\geq 2}(r-1)n_{r}-d+1\bigg)=0.$$
The above condition implies that the discriminant satisfies
$$\triangle_{d_{1}} = (d-1)^2 - 4\bigg(\sum_{r\geq 2}(r-1)n_{r}-d+1\bigg) \geq 0,$$
and this gives us
$$\sum_{r\geq 2}(r-1)n_{r} \leq \bigg\lfloor \frac{(d-1)(d+3)}{4}\bigg\rfloor.$$
Then it is easy to check that for $\A_{13}$ we do indeed get equality.
\end{proof}

\begin{rk}
\label{rkA13}
We also notice the following homological properties of the arrangements constructed as additions to $\A'$.
\begin{enumerate}
\item $\A_{8}= \A' \cup L_8$ is nearly free with exponents $(4,4,4)$.
\item $\A_{9}=\A' \cup L_8 \cup L_9$ is a 4-syzygy curve with exponents $(5,5,5,5)$. This curve is in fact a maximal Tjurina curve of type $(d,r)=(9,5)$, see \cite{maxT} for the definition and the properties of such curves. 
Moreover, it is known that the defect of this curve is $\nu(\A_9)=2$, see \cite[Theorem 3.11]{3syz}.
 
\item $\A_{10}=\A' \cup L_8 \cup L_9 \cup L_{10}$ is a 4-syzygy curve with exponents $(5,6,6,6)$. A \verb}SINGULAR} computation shows that the defect of this curve is equal to $\nu(\A_{10})=3$. 
 
\item $\A_{11}=\A' \cup L_8 \cup L_9 \cup L_{10} \cup L_{11}$ is a 4-syzygy curve with exponents $(6,6,6,6)$. This curve is in fact a maximal Tjurina curve of type $(d,r)=(11,6)$, see \cite{maxT}. Moreover, we can check that the defect is equal to $\nu(\A_{11})=2$, see \cite[Theorem 3.11]{3syz}.
 
\item $\A_{12}=\A' \cup L_8 \cup L_9 \cup L_{10} \cup L_{11} \cup L_{12}$ is nearly free with exponents $(6,6,6)$. 
\end{enumerate} 
Let us observe that all these $5$ arrangements and $\A$ have maximal multiplicity of points
 $m=4$, e.g., the point $(0:0:1)$ is of multiplicity 4 for all of them.
 Hence the first exponent $d_1$ for them takes all values in the interval
 $$[m,m+2].$$
 Moreover, by adding lines to the free line arrangement $\A_7$, with type $t(\A_7)=0$ according to Definition 1.2 in \cite{ADP}, we get, in ascending order, line arrangements with types $1,2,2,2,1$, hence the homological complexity first increases and then decreases.
 \end{rk}
 We would like to prove geometrically that $\A$ is free, that is without computer-aided computations. As we have seen in Remark \ref{rkA13},
if we construct $\A$ starting from $\A'$ and by adding $6$ lines, we go far away from the class of free arrangements in this process. Indeed, free arrangements correspond to the defect equal to $\nu=0$, nearly free arrangements have defect equal to $\nu=1$, but the arrangement $\A_{10}$ has defect equal to $\nu=3$. That is why we look for an alternative construction of $\A$ starting with a free line arrangement with $11$ lines to be described below.

Let $L_j$ be the line defined by the $j$-th factor in the equation for $\A_{13}$, for $j=1,\ldots,13$.
Then we see the following.
\begin{itemize}
\item[I1.] The lines $L_1$ and $L_2$ contain each $4$ points of multiplicity $4$.

\item[I2.] The lines $L_3, L_5,L_6,L_7,L_9,L_{10}, L_{11},L_{12} $ and $L_{13}$ contain each $2$ points of multiplicity $4$.

\item[I3.] The lines $L_4$ and $L_8$ contain each a single point of multiplicity $4$.
\end{itemize}
To determine the number of points of multiplicity $4$ on a given line $L_j$, we look at sets $\{L_j,L_a,L_b,L_c\}$ of 4 lines in $\A$, one of them being $L_j$, such that the corresponding equations $\{\ell_j,\ell_a,\ell_b,\ell_c\}$ span a 2-dimensional vector subspace in $S_1$. For instance, one point of multiplicity $4$ on $L_1$ comes from the set $\{L_1,L_2,L_4,L_8\}$,
since the corresponding equations $\{x,y,x+y,x+e^2y\}$ clearly span a 2-dimensional subspace in $S_1$, in other words they form a pencil of linear forms.

It follows that, in some sense, the lines $L_4$ and $L_8$ are the \textit{worst lines} in the arrangement $\A_{13}$. We have the following results.

\begin{thm}
\label{thm1}
Let $\B_{11}$ be the arrangement obtained from $\A_{13}$ by deleting the lines $L_4$ and $L_8$. Then $\B_{11}$ is a free arrangement with exponents $(4,6)$.
\end{thm}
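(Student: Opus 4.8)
The plan is to realize $\B_{11}$ by deleting from $\A_{13}$ the two lines $L_4$ and $L_8$ singled out above, to track the global Tjurina number under these two deletions by means of Lemma~\ref{lem01}, and then to recognize freeness through the numerical criterion~\eqref{eqCTC}.

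First I would record the combinatorial data of $\A_{13}$: since it has $n_2=n_3=9$ and $n_4=7$, one gets $\tau(\A_{13})=9+4\cdot 9+9\cdot 7=108$. Then, inspecting the explicit linear forms (keeping $e^2-e+1=0$ in mind), I would check the two incidence counts that feed Lemma~\ref{lem01}: the line $L_4$ meets the remaining twelve lines of $\A_{13}$ in exactly $r_{L_4}=7$ points (the single point of multiplicity $4$ on $L_4$, namely $(0:0:1)$, together with three triple and three double points of $\A_{13}$), and, after deleting $L_4$, the line $L_8$ meets the eleven lines of $\B_{11}$ again in exactly $7$ points. Applying Lemma~\ref{lem01} twice, first with $d=13$ and then with $d=12$, then yields
\[
\tau(\B_{11})=108-\bigl(2\cdot 12-7\bigr)-\bigl(2\cdot 11-7\bigr)=108-17-15=76 .
\]
Moreover $\B_{11}\subset\A_{13}$ gives $m(\B_{11})\le m(\A_{13})=4$, with equality since $(1:0:0)$ still lies on four lines of $\B_{11}$; and $\tau_{max}(11,4)=10^2-4\cdot 6=76$.

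To conclude it now suffices to show that the minimal exponent $d_1:=d_1(\B_{11})$ equals $4$. As $d+1=12>8=2m$, \cite{Mich} gives $d_1\ge m-1=3$; the value $d_1=3$ is impossible because by \cite{Mich} it would force $\B_{11}$ to be free with exponents $(m-1,d-m)=(3,7)$, hence $\tau(\B_{11})=\tau_{max}(11,3)=79\ne 76$. On the other hand, for every $d_1\ge 5$ the du~Plessis--Wall estimates \cite{dPCTC} give $\tau(\B_{11})\le 75<76$ (for $d_1=5$ this is $\tau(\B_{11})\le\tau_{max}(11,5)=75$, and for $d_1\ge 6$ the sharper bound subtracts $\binom{2d_1-9}{2}\ge 3$), again a contradiction. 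Hence $d_1=4$, and since $\tau(\B_{11})=76=\tau_{max}(11,4)$, the criterion~\eqref{eqCTC} shows that $\B_{11}$ is free with exponents $(d_1,d_2)=(4,6)$.

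I expect the only real difficulty to be bookkeeping rather than theory: one must read off the incidence structure of $\B_{11}$ from the eleven explicit linear forms accurately enough to certify $r_{L_4}=r_{L_8}=7$ and $m(\B_{11})=4$ --- a finite but somewhat fussy check, because of the sixth root of unity $e$. No restriction/division theorem is needed for this argument (although one could alternatively note that $\B_{11}$ is divisionally free, as it has a line carrying exactly five of its multiple points). Finally, if one is willing to take the freeness of $\A_{13}$ with exponents $(6,6)$ as given, the two deletions are governed directly by Terao's deletion theorem, since each of the two restrictions consists of $7=d_2+1$ lines; but it seems preferable to keep the proof independent of the freeness of $\A_{13}$, since reproving that freeness geometrically is exactly the aim of the surrounding discussion.
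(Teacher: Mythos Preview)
Your proof is correct and follows the same strategy as the paper: verify $m(\B_{11})=4$, compute $\tau(\B_{11})=76=\tau_{max}(11,4)$, bound $d_1$ using \cite{Mich} and \cite{dPCTC}, and conclude freeness via the criterion~\eqref{eqCTC}. The paper's only shortcut is to invoke \cite[Theorem~2.8]{Mich} in the form $d_1\geq\lceil\tfrac{2}{m}\,d-2\rceil=\lceil\tfrac{11}{2}-2\rceil=4$ directly, sparing the separate exclusion of $d_1=3$; conversely, your computation of $\tau(\B_{11})$ via two applications of Lemma~\ref{lem01} is a tidy alternative to tabulating the multiplicities of $\B_{11}$ afresh.
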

\proof
The defining equation of $\B_{11}$ is
$$g_{11}=xyz(x+z)(y+z)(x+y+z)(x+ez)(ey+z)(x+ey+ez)(x+ey+z)(x+e^2y+ez)=0.$$
Note that $(1:0:0)$ is a point of multiplicity 4 for  $\B_{11}$, since the lines
$y=0$, $z=0$, $y+z=0$ and $ey+z=0$ meet there. It follows that
the minimal exponent $d_1$ of $\B_{11}$ satisfies
$$d_1 \geq \bigg\lceil\frac{1}{2}\cdot 11 - 2 \bigg\rceil = 4,$$
see \cite[Theorem 2.8]{Mich}.
If $d_1=4$, then $ \B_{11}$ is free with exponents $(4,6)$, which gives the correct Tjurina number, and $d_1 >4$ yields too small Tjurina numbers for $\B_{11}$, see also \cite[Remark 5.4]{Der}.
\endproof

To continue this analysis, we recall the following exact sequence of $\C$-vector spaces which is a reformulation of \cite[Theorem 6.2]{DIS}:
\begin{equation}
\label{eq1}
0 \to D_0(g')_{k-1} \to D_0(g)_k \to R_{k+1-r} \to N(g')_{k+d-3} \to N(g)_{k+d-1} \to R_{r-k-3}
\end{equation}
where $\B:g=0$ is an arrangement of $d$ lines in $\PP^2$, $\B':g'=0$ is the arrangement obtained from $\B$ by deleting one line, say $L \in \B$,
$k$ is any integer, $R=\C[y,z]$ is the polynomial ring in $y,z$, we set $r= |L \cap \B'|$, and $N(g)$, $N(g')$ are the graded Jacobian $S$-modules corresponding to $g$ and $g'$, respectively -- see \cite{DIS} for the necessary definition. Using this exact sequence we can prove the following.

 \begin{thm}
\label{thm2}
The arrangement  $\A=\A_{13}$ is free  with exponents $(6,6)$.
\end{thm}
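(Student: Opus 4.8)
The plan is to build $\A=\A_{13}$ from the free arrangement $\B_{11}$ of Theorem~\ref{thm1} by adding back the two ``worst lines'' $L_4:\ell_4=x+y=0$ and $L_8:\ell_8=x+e^2y=0$, one at a time, and to track the graded Jacobian modules $N(\cdot)$ through each addition using the exact sequence~\eqref{eq1}. First I would set $\B_{12}=\B_{11}\cup L_8$ and apply~\eqref{eq1} with $\B'=\B_{11}$, $\B=\B_{12}$, $L=L_8$; here $r=|L_8\cap\B_{11}|$ must be computed from the list I2--I3, and the key point is that $L_8$ meets $\B_{11}$ in relatively few points. Since $\B_{11}$ is free (hence $N(g_{11})=0$), the sequence degenerates and gives control of $N$ for $\B_{12}$ together with its module of derivations $D_0$; in particular one expects $\B_{12}$ to be plus-one generated (or nearly free), and one reads off its exponents from the degrees where $D_0(g_{12})_k$ jumps, cross-checking against the Tjurina number via the characterization $\tau=\tau_{max}$ from~\eqref{eqCTC} and the combinatorics $\tau(\A)=\sum n_r(r-1)^2$.

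Next I would repeat the argument with $\B'=\B_{12}$, $\B=\A_{13}=\B_{12}\cup L_4$, $L=L_4$, again reading off $r=|L_4\cap\B_{12}|$ from the incidence data. The target is to show $N(f)$ vanishes in the relevant degrees, equivalently that the defect $\nu(\A_{13})=0$; feeding the now-known module $N(g_{12})$ and $D_0(g_{12})$ into~\eqref{eq1} and using the surjectivity/injectivity at both ends (which is where the smallness of $r$ matters, controlling the terms $R_{k+1-r}$ and $R_{r-k-3}$) should force $N(f)_k=0$ for all $k$, so $M(f)$ has projective dimension $\le 2$ and $\A$ is free. The exponents $(6,6)$ then follow either directly from the degree bookkeeping in the exact sequence or, more cheaply, from the fact that $d=13$ is odd-adjacent: $d_1+d_2=12$, $d_1\ge m+2=6$ by the earlier discussion, so $(d_1,d_2)=(6,6)$ is the only possibility once freeness is known.

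An alternative, slightly different route I would keep in reserve is to run~\eqref{eq1} in the ``addition'' direction from the other side: note that $\A_{13}$ with one of $L_4,L_8$ removed is some $\B_{12}$, and if one can independently show $\B_{12}$ is nearly free with a known exponent triple, then the addition-deletion criteria (Theorem~\ref{thm03}, or the Abe-type results already cited via~\cite{POG}) apply directly, since adding a line $L$ to a nearly free arrangement with $|L\cap\B_{12}|$ equal to the predicted value yields a free arrangement. This would sidestep some of the homological bookkeeping, at the cost of needing the precise exponents of the intermediate arrangement.

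The main obstacle I anticipate is controlling the connecting maps in~\eqref{eq1} precisely enough to conclude $N(f)=0$ rather than merely bounding its dimension: the sequence only gives a six-term exact complex, so vanishing of $N(g)$ requires both that the map $N(g')_{k+d-3}\to N(g)_{k+d-1}$ is zero (i.e.\ $N(g')$ is killed) \emph{and} that the preceding map $R_{k+1-r}\to N(g')_{k+d-3}$ is surjective in the right degrees. Pinning this down forces one to know $N(g_{12})$ exactly, not just its Hilbert function, and that in turn depends on an honest identification of the single intersection point of multiplicity $4$ lying on $L_8$ (from item I3) and of how $L_4$ interacts with it. Managing these incidences carefully — essentially a finite check that the relevant $4$-tuples of linear forms do or do not span only a $2$-dimensional space — is the crux; everything else is the formal machinery of~\eqref{eq1} combined with the already-established freeness of $\B_{11}$.
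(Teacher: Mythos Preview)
Your overall framework---start from the free arrangement $\B_{11}$ and add back $L_4$, $L_8$ one at a time using the exact sequence~\eqref{eq1}---is exactly what the paper does. But you are making the argument substantially harder than it needs to be, and the ``main obstacle'' you flag is illusory.

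The paper never tracks $N(\cdot)$ at all, and never worries about the connecting maps. It uses only the \emph{left three terms} of~\eqref{eq1},
\[
0 \to D_0(g')_{k-1} \to D_0(g)_k \to R_{k+1-r},
\]
to push up a lower bound on the minimal exponent. Concretely (the paper adds $L_4$ first, then $L_8$, but the roles are symmetric): one computes $|L_4\cap\B_{11}|=7$, so for $k=4,5$ the term $R_{k+1-7}$ vanishes and $D_0(g_{12})_k\cong D_0(g_{11})_{k-1}$; since $\B_{11}$ has exponents $(4,6)$ this gives $D_0(g_{12})_4=0$ and $\dim D_0(g_{12})_5=1$, so $d_1(\B_{12})=5$. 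Then $|L_8\cap\B_{12}|=7$, and the same trick at $k=5$ gives $D_0(f)_5\cong D_0(g_{12})_4=0$, hence $d_1(\A)\geq 6$. That is the entire content extracted from~\eqref{eq1}. Freeness and the exponents $(6,6)$ then follow in one line from the combinatorial identity $\tau(\A)=108=\tau_{max}(13,6)$ via~\eqref{eqCTC}. So the Tjurina criterion is not a cross-check; it is the punchline, and it replaces all the delicate surjectivity analysis you were bracing for.

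Two smaller points. First, your intuition about $r$ is inverted: you want $r=|L\cap\B'|$ \emph{large} (here $r=7>k+1$), so that $R_{k+1-r}=0$ and the truncated sequence forces $D_0(g)_k\cong D_0(g')_{k-1}$; ``relatively few points'' would wreck the argument. Second, the intermediate arrangement $\B_{12}$ is not plus-one generated as you guessed---it is in fact free with exponents $(5,6)$ (cf.\ Remark~\ref{rkDF} and Example~\ref{exAe1}(i))---though the proof of Theorem~\ref{thm2} does not need this; it only needs $D_0(g_{12})_4=0$.
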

\proof

Note that $\A$ is obtained from  $\B_{11} $ by adding the lines $L_4$ and $L_8$. The intersection $L_4 \cap  \B_{11}$ consists of 7 points, so 
if we apply the exact sequence \eqref{eq1} to the pair
$(\B,\B')=(\B_{12}=\B_{11}  \cup L_4, \B_{11} )$ for $k=4$, and we get
$$0 \to D_0(g')_3 \to D_0(g)_4 \to R_{-2}$$
and hence $D_0(g)_4=0$. Now the same exact sequence for $k=5$ gives
$$0 \to D_0(g')_4 \to D_0(g)_5 \to R_{-1}$$
and hence $\dim D_0(g)= \dim D_0(g')=1$. It follows that the first exponent of the arrangement $\B_{12}$ is $r_{12}=5$.

Now we add the line $L_8$ to $\B_{12}$ to get the arrangement $\A$.
The intersection $L_8 \cap  \B_{12}$ consists of $7$ points, so 
if we apply the exact sequence \eqref{eq1} to the pair
$(\B,\B')=(\A, \B_{12} )$ for $k=5$ and we get
$$0 \to D_0(g')_4 \to D_0(g)_5 \to R_{-1}$$
and hence $D_0(g)_5=0$. 
It follows that the first exponent of the arrangement $\A$ is $r_{13} \geq 6$.
Since
$$\tau(\A)=12^2-6^2=108= \tau_{max}(13,6),$$
it follows that $r_{13} = 6$ and $\A$ is free with exponents $(6,6)$ as we have claimed.
\endproof

The proofs above give perhaps a new proof for the following known result.

 \begin{cor}
\label{cor2}
The arrangement $\A=\A_{13}$ satisfies Terao's Conjecture.
\end{cor}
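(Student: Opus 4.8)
The plan is to observe that the path to freeness followed in Theorems \ref{thm1} and \ref{thm2} uses only data extracted from the intersection lattice of $\A=\A_{13}$, so it transfers verbatim to any line arrangement $\overline\A$ with the same lattice and shows that $\overline\A$ is again free with exponents $(6,6)$; Terao's Conjecture for $\A$ then follows immediately.

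Concretely, I would fix $\overline\A$ with the intersection lattice of $\A$ and let $\overline\B_{11}$, $\overline\B_{12}$ be the subarrangements of $\overline\A$ corresponding to $\B_{11}$, $\B_{12}$ under a lattice isomorphism, so that $\overline\B_{11}$ and $\overline\B_{12}$ carry the intersection lattices of $\B_{11}$ and $\B_{12}$. First I would record the combinatorial data: $|\overline\A|=13$, $m(\overline\A)=4$, $\tau(\overline\A)=\sum_{r\geq 2}n_r(r-1)^2=108=\tau_{max}(13,6)$, and for every line $L$ of $\overline\A$ the number $|L\cap(\overline\A\setminus L)|$ equals the corresponding value for $\A$; in particular $|L_4\cap\overline\B_{11}|=|L_8\cap\overline\B_{12}|=7$. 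Next, $\overline\B_{11}$ has a point of multiplicity $4$ and $m(\overline\B_{11})=4$, so \cite[Theorem 2.8]{Mich} gives $d_1(\overline\B_{11})\geq 4$; since $\tau(\overline\B_{11})=\tau(\B_{11})$ is combinatorial, the numerical argument of Theorem \ref{thm1} (together with \cite[Remark 5.4]{Der}) forces $d_1(\overline\B_{11})=4$ and $\overline\B_{11}$ free with exponents $(4,6)$. Consequently the Hilbert function of $D_0(\overline\B_{11})$ is determined: $\dim D_0(\overline\B_{11})_3=0$ and $\dim D_0(\overline\B_{11})_4=1$.

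Then I would feed this into the exact sequence \eqref{eq1}, exactly as in the proof of Theorem \ref{thm2}. Applying \eqref{eq1} to the pair $(\overline\B_{12},\overline\B_{11})$ with $r=7$ at $k=4$ gives $D_0(\overline\B_{12})_4\cong D_0(\overline\B_{11})_3=0$, and at $k=5$ it gives a monomorphism $D_0(\overline\B_{11})_4\hookrightarrow D_0(\overline\B_{12})_5$, so $\dim D_0(\overline\B_{12})_5\geq 1$. Applying \eqref{eq1} to $(\overline\A,\overline\B_{12})$ with $r=7$ at $k=5$ gives $D_0(\overline\A)_5\cong D_0(\overline\B_{12})_4=0$, hence $d_1(\overline\A)\geq 6$; at $k=6$ the monomorphism $D_0(\overline\B_{12})_5\hookrightarrow D_0(\overline\A)_6$ gives $\dim D_0(\overline\A)_6\geq 1$, hence $d_1(\overline\A)\leq 6$. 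Thus $d_1(\overline\A)=6$, and since $\tau(\overline\A)=108=\tau_{max}(13,6)$ the freeness criterion \eqref{eqCTC} shows that $\overline\A$ is free with exponents $(6,6)$. As $\overline\A$ was arbitrary, $\A$ satisfies Terao's Conjecture.

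The main obstacle is bookkeeping: one must check that every ingredient of the proofs of Theorems \ref{thm1} and \ref{thm2} is genuinely a combinatorial invariant --- above all, that the freeness of $\B_{11}$ rests only on its maximal multiplicity, its (combinatorial) Tjurina number, and the bound of \cite{Mich}, rather than on an explicit defining equation, and that the terms of \eqref{eq1} invoked here depend only on the integers $r=|L\cap\B'|$ and on the exponents of the free arrangement $\overline\B_{11}$. A secondary point is to rule out $d_1(\overline\A)\geq 7$ at the end; this is taken care of, as above, by the left-exactness of \eqref{eq1}, which forces $\dim D_0(\overline\A)_6\geq\dim D_0(\overline\B_{12})_5\geq 1$. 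Alternatively, one may simply note that divisional freeness, exploited in Remark \ref{rkDF}, is a combinatorial property, which gives a shorter but less self-contained proof.
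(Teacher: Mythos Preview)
Your proposal is correct and follows exactly the paper's approach: the paper's own proof of Corollary~\ref{cor2} is the single observation that the proofs of Theorems~\ref{thm1} and~\ref{thm2} apply verbatim to any $\overline\A$ with the same intersection lattice, and you have spelled out in detail why each ingredient (the multiplicity bound from \cite{Mich}, the Tjurina number, the values $r_L=7$, and the degrees appearing in \eqref{eq1}) is indeed combinatorial. The extra $k=6$ step you insert to bound $d_1(\overline\A)\leq 6$ is harmless but unnecessary, since $\tau(\overline\A)=\tau_{max}(13,6)$ together with $d_1(\overline\A)\geq 6$ already forces freeness with exponents $(6,6)$ by \eqref{eqCTC}.
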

Indeed, for any line arrangement $\overline \A$, having the same intersection lattice with $\A$, the proofs of Theorems \ref{thm1} and \ref{thm2} can be applied to yield this claim.

\subsection{On the free line arrangement $\CC$ with $d_1=m+2=6<d_2=7$} 

Consider now the line arrangement
$$\CC_{14} =xyz(x+y)(x+z)(x+(1-e)y)(x+(e-1)z)(y+(e-1)z)(x+(2-e)y+z)$$
$$(x+y+(e-1)z)(y+(e-2)z)(x+(1-e)y+z)(x+(2-e)y+(e-1)z)(x+(2-e)y+ez)=0,$$
where $e^2-3e+3=0$.
This arrangement has $n_2=13$, $n_3=6$ and $n_4=10$. Moreover it is free with exponents $(6,7)$. Now we prove its freeness geometrically. If we look at the number of points of multiplicity $4$ situated on each line in $\CC_{14}$, we see that the lines $L_4$ and $L_6$, corresponding to the 4th and the 6th factors above, are exceptional because each of them contains only one point of multiplicity $4$.
As a result, we consider the  arrangement $\D_{12}$ obtained from $\CC_{14}$ by deleting the lines $L_4$ and $L_6$.

 \begin{thm}
\label{thm10}
Let $\D=\D_{12}$ be the arrangement obtained from $\CC=\CC_{14}$ by deleting the lines $L_4$ and $L_6$. Then $\D$ is a free arrangement with exponents $(4,7)$.
\end{thm}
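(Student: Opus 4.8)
The plan is to follow the proof of Theorem~\ref{thm1} essentially verbatim, with $\CC=\CC_{14}$ in place of $\A_{13}$ and the ``worst lines'' $L_4,L_6$ of $\CC_{14}$ in place of $L_4,L_8$. Let $g$ be the defining polynomial of $\D=\D_{12}$, i.e. the product of the twelve factors of the equation of $\CC_{14}$ other than the fourth factor $x+y$ and the sixth factor $x+(1-e)y$. Since $\D\subset\CC$ and $m(\CC)=4$ we have $m(\D)\le 4$; on the other hand $L_4\cap L_6=(0:0:1)$, this point lies on $L_1,L_2,L_4,L_6$, and by hypothesis it is the only point of multiplicity $4$ of $\CC$ on $L_4$ and likewise the only one on $L_6$. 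Hence each of the remaining nine points of multiplicity $4$ of $\CC$ avoids both $L_4$ and $L_6$ and persists in $\D$, so $m(\D)=4$. Applying \cite[Theorem~2.8]{Mich} to $\D$ gives
\[
d_1\ \ge\ \Big\lceil\tfrac12\cdot 12-2\Big\rceil\ =\ 4 .
\]

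Next I would compute $\tau(\D)$ by deleting $L_4$ from $\CC$ and then $L_6$, applying Lemma~\ref{lem01} twice. One starts from $\tau(\CC)=n_2+4n_3+9n_4=13+24+90=127=\tau_{max}(14,6)$. Since $(0:0:1)$ is already cut out on $L_6$ by $L_1$, reinstating $L_4$ adds no new intersection point to $L_6$, and B\'ezout's theorem together with the fact that $(0:0:1)$ is the only fourfold point of $\CC$ on $L_4$ and on $L_6$ gives $|L_4\cap(\CC\setminus L_4)|=11-t_4$ and $|L_6\cap\D|=11-t_6$, where $t_4$ (resp. $t_6$) is the number of triple points of $\CC$ on $L_4$ (resp. on $L_6$); no triple point of $\CC$ lies on both, since it would have to be the fourfold point $(0:0:1)$. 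Two applications of Lemma~\ref{lem01} then yield
\[
\tau(\D)\ =\ 127-(15+t_4)-(13+t_6)\ =\ 99-(t_4+t_6).
\]
A direct inspection of the equation of $\CC_{14}$ shows that each of its six triple points lies on $L_4\cup L_6$, i.e. $t_4+t_6=6$; equivalently $\D_{12}$ has combinatorial type $n_2=12$, $n_3=0$, $n_4=9$. Hence $\tau(\D)=93=\tau_{max}(12,4)$.

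Finally I would conclude exactly as in Theorem~\ref{thm1}: since $d_1\ge 4$ and $\tau(\D)=93=\tau_{max}(12,4)$, while $d_1>4$ forces either a Tjurina number strictly below $93$ (for $d_1\in\{5,6\}$, as $\tau_{max}(12,5)=\tau_{max}(12,6)=91$) or a contradiction with $d_1\le d_2$ (for $d_1=7$, since $\tau_{max}(12,7)=93$ would give exponents $(7,4)$, and for $d_1=8=d-m$ by the structure of that case in \cite{Mich,Der}), we must have $d_1=4=m(\D)$. Then \eqref{eqCTC}, equivalently Proposition~\ref{prop01}, shows that $\D$ is free with exponents $(m,\,d-m-1)=(4,7)$.

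The only step that is not pure bookkeeping is the verification that the six triple points of $\CC_{14}$ are distributed among $L_4$ and $L_6$ --- equivalently, that $\D_{12}$ carries no triple point --- which is the analogue for $\CC_{14}$ of the incidence list I1--I3 recorded for $\A_{13}$; it is an elementary but slightly tedious check on the explicit equation. Alternatively one can sidestep it: once $d_1$ is known to lie in $\{4,\dots,7\}$ (using $d_1\le d-m=8$ and excluding $d_1=8$ as above), one has $\tau(\D)\le\tau_{max}(12,d_1)\le 93$, while the displayed formula gives $\tau(\D)\ge 93$; hence $\tau(\D)=93$ and $t_4+t_6=6$ automatically, and the conclusion follows as before. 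Everything else runs in exact parallel with the $\A_{13}$--$\B_{11}$ case, and I expect the combinatorial verification above to be the main (very mild) obstacle.
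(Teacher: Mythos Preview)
Your main line of argument is correct and matches the paper's proof: exhibit a point of multiplicity $4$ in $\D$ (the paper names $(0:1:0)$, you argue that nine of the ten fourfold points of $\CC$ survive), deduce $d_1\ge 4$ from \cite[Theorem~2.8]{Mich}, check that $\tau(\D)=93=\tau_{max}(12,4)$, and conclude via Proposition~\ref{prop01}. Your computation of $\tau(\D)$ through two applications of Lemma~\ref{lem01} and the identity $\tau(\D)=99-(t_4+t_6)$ is a nice explicit version of what the paper leaves as ``gives the correct Tjurina number''; the verification $t_4+t_6=6$ is indeed the only real check.

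One remark: your case analysis on $d_1$ is redundant. Once $\tau(\D)=\tau_{max}(12,m(\D))$ is known, Proposition~\ref{prop01} already delivers the conclusion, and its proof (citing \cite{dPCTC}) handles all cases $d_1>m$ at once. Your own treatment of $d_1=8$ is the weak spot: the vague reference to ``the structure of that case in \cite{Mich,Der}'' does not rule it out, since the naive bound $\tau(\D)\le\tau_{max}(12,8)=97$ is compatible with $\tau(\D)\ge 93$. What actually excludes $d_1\ge d/2=6$ is the full du~Plessis--Wall inequality, which in that range carries the correction term $-\binom{2d_1-d+2}{2}$ and yields $\tau(\D)\le 90,\,87,\,82$ for $d_1=6,7,8$. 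This is exactly what Proposition~\ref{prop01} invokes, so you may as well cite it directly. The same gap affects your ``alternative'' argument: without the corrected bound you cannot close the circle $\tau(\D)\ge 93\ge\tau_{max}(12,d_1)\ge\tau(\D)$ when $d_1=8$, so that shortcut as written does not quite avoid the combinatorial check.
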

\proof
The proof goes along the same lines as for Theorem \ref{thm1}. First of all, we observe that $(0:1:0)$ is a point of multiplicity $4$ for $\D$ since the lines $x=0$, $z=0$, $x+z=0$ and $x+(e-1)z=0$ meet there. It follows that
the minimal exponent $d_1$ of $\D$ satisfies
$$d_1 \geq \bigg\lceil\frac{1}{2}\cdot 12 - 2 \bigg\rceil = 4,$$
see \cite[Theorem 2.8]{Mich}.
If $d_1=4$, then $ \D$ is free with exponents $(4,7)$, which gives the correct Tjurina number, and $d_1 >4$ yields too small Tjurina numbers for $\D$, see also \cite[Remark 5.4]{Der}.
\endproof

 \begin{thm}
\label{thm20}
The arrangement $\CC=\CC_{14}$ is a free arrangement with exponents~$(6,7)$.
\end{thm}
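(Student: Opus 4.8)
The plan is to imitate the proof of Theorem~\ref{thm2}: start from the free arrangement $\D=\D_{12}$ provided by Theorem~\ref{thm10}, add back the two deleted lines $L_4$ and $L_6$ one at a time, and use the exact sequence~\eqref{eq1} to control how the module $D_0$ of logarithmic derivations changes at each step. Concretely, I would first set $\D' = \D_{12}\cup L_4$ (an arrangement of $13$ lines) and then recover $\CC = \D' \cup L_6$. The computation that makes this work is the determination of the intersection numbers $r = |L_4 \cap \D_{12}|$ and $r = |L_6 \cap \D'|$: since $L_4$ and $L_6$ each carry only one quadruple point of $\CC$, and one expects the remaining incidences to be double or triple points, these $r$ should turn out to be $8$ (so that $2(d-1)-r$ and the shift $k+1-r$ in~\eqref{eq1} land in the right range). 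I would verify these incidence counts directly from the linear forms, just as in items I1--I3 for $\A_{13}$.

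The second step is the derivation-counting itself. For the pair $(\D', \D_{12})$ with $\D_{12}$ free of exponents $(4,7)$, I apply~\eqref{eq1} for a suitable small value of $k$: with $r = |L_4\cap\D_{12}| = 8$ one has $D_0(g')_{k-1}\hookrightarrow D_0(g)_k \to R_{k+1-r} = R_{k-7}$, which vanishes for $k\le 6$, so $\dim D_0(\D')_k = \dim D_0(\D_{12})_{k-1}$ in that range. Pushing $k$ up to the value where $D_0(\D_{12})$ first becomes nonzero (namely $k-1 = 4$, i.e. $k=5$, where $\D_{12}$ has a degree-$4$ derivation) forces $D_0(\D')_5 \ne 0$ while $D_0(\D')_k = 0$ for $k\le 4$; hence $\D'$ has minimal exponent $5$. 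Repeating the argument with $(\CC, \D')$ and $r = |L_6\cap \D'| = 8$, the term $R_{k+1-r}$ again vanishes in the relevant range, so the minimal exponent of $\CC$ is at least $6$. Finally, since $\CC$ is free would require $\tau(\CC) = \tau_{max}(14,6) = 13^2 - 6\cdot 7 = 127$, and one checks from the weak combinatorics $n_2=13, n_3=6, n_4=10$ that $\tau(\CC) = 13 + 6\cdot 4 + 10\cdot 9 = 127$, the only way to have minimal exponent $\ge 6$ with this Tjurina number is $d_1 = 6$, $d_2 = 7$, so $\CC$ is free with exponents $(6,7)$.

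I should double-check one subtlety: the exact sequence~\eqref{eq1} only gives an injection $D_0(g')_{k-1}\hookrightarrow D_0(g)_k$ together with control of the cokernel via $R_{k+1-r}$; to conclude $D_0(g)_k = 0$ I need $D_0(g')_{k-1} = 0$ \emph{and} $R_{k+1-r} = 0$, and to conclude nonvanishing at the next degree I need only that $D_0(g')_{k-1}\ne 0$. So the argument is really: the minimal exponent can only go up by at most the degree-shift permitted by~\eqref{eq1}, and the vanishing of the $R$-term in low degrees forbids it from going up too slowly. This is exactly the mechanism used in Theorem~\ref{thm2}, and I would phrase the proof to parallel that one as closely as possible, merely substituting $\D_{12}$, $L_4$, $L_6$, the exponents $(4,7)$, and the Tjurina number $127$.

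The main obstacle I anticipate is pinning down the intersection numbers $|L_4\cap\D_{12}|$ and $|L_6\cap\D'|$ exactly, since the sequence~\eqref{eq1} is sensitive to these through the shift $k+1-r$: if either equals $7$ rather than $8$, the term $R_{k+1-r}$ no longer vanishes at the needed degree and the bookkeeping changes. This requires a careful, but elementary, check of which quadruples of the thirteen (resp. fourteen) linear forms defining $\CC$ become linearly dependent in $S_1$ after deleting the two exceptional lines. Everything else — the resolution-theoretic input, the Tjurina computation, and the final combinatorial pinning of the exponents — is routine given the results already established in the excerpt.
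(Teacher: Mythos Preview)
Your proposal is correct and follows essentially the same approach as the paper's own proof: the paper also adds $L_4$ first to obtain an intermediate arrangement (called $\CC'$ there, your $\D'$) with $|L_4\cap\D_{12}|=8$, applies \eqref{eq1} at $k=4,5$ to see its minimal exponent is $5$, then adds $L_6$ with $|L_6\cap\CC'|=8$ and applies \eqref{eq1} at $k=5$ to get $d_1(\CC)\ge 6$, concluding via $\tau(\CC)=127=\tau_{max}(14,6)$. The intersection numbers you flagged as the only delicate point are indeed both equal to $8$, exactly as you anticipated.
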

\proof
To get from $\D$ to $\CC$ we add first the line $L_4: x+y=0$ and get a new arrangement $\CC'$. Then one has $|L_4 \cap \D|=8$, and the exact sequence \eqref{eq1} applied to the pair $(\CC',\D)$ and to $k=4$ yields
$$0 \to D_0(g')_3 \to D_0(g)_4 \to R_{-3}$$
which gives us $D_0(g)_4=0$ since $D_0(g')_3=R_{-3}=0$.
The same exact sequence for $k=5$ yields
$$1= \dim D_0(g')_3 =\dim D_0(g)_4 ,$$
since $R_{-2}=0$. It follows that the minimal exponent of $\CC'$ is 5.

Then we add to $\CC'$ the line $L_6: x+(1-e)y=0$.  One has $|L_6 \cap \CC'|=8$ and the exact sequence \eqref{eq1} applied to the pair $(\CC,\CC')$ and to $k=5$ yields
$$0 \to D_0(g')_4 \to D_0(g)_5 \to R_{-2}$$
which gives us $D_0(g)_5=0$ since $D_0(g')_4=R_{-2}=0$. It follows that the minimal exponent $d_1$ of $\CC$ satisfies $d_1 \geq 6$. On the other hand, we have
$$\tau(\CC)=127= \tau_{max}(14,6),$$
which implies that $\CC$ is free with exponents $(6,7)$.

\endproof

The proofs above give a new proof for the following known result.

 \begin{cor}
\label{cor20}
The arrangement $\CC=\CC_{14}$ satisfies Terao's Conjecture.
\end{cor}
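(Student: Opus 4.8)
The plan is to mimic exactly the argument used for Corollary \ref{cor2}, since the freeness proofs of Theorems \ref{thm10} and \ref{thm20} are purely lattice-theoretic in nature. Let $\overline{\CC}$ be any line arrangement with the same intersection lattice as $\CC = \CC_{14}$. I want to show $\overline{\CC}$ is free with exponents $(6,7)$. The key observation is that every ingredient in the proofs of Theorems \ref{thm10} and \ref{thm20} depends only on the intersection lattice: the combinatorial Tjurina number $\tau(\CC)=\sum_{r\ge2}n_r(r-1)^2=127=\tau_{max}(14,6)$, the fact that certain distinguished lines $L_4$ and $L_6$ carry exactly one quadruple point each, and the incidence counts $|L_4\cap\D|=8$ and $|L_6\cap\CC'|=8$ that feed into the exact sequence \eqref{eq1}.

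First I would identify, in $\overline{\CC}$, the lines $\overline{L_4}$ and $\overline{L_6}$ corresponding under the lattice isomorphism to $L_4$ and $L_6$, and form $\overline{\D}$ by deleting both. Running the argument of Theorem \ref{thm10}: some point of $\overline{\D}$ has multiplicity $4$ (the image of $(0:1:0)$), so by \cite[Theorem 2.8]{Mich} the minimal exponent of $\overline{\D}$ is at least $\lceil \tfrac{1}{2}\cdot 12 - 2\rceil = 4$; since $\tau(\overline{\D})=\tau(\D)$ is a lattice invariant and equals $\tau_{max}(12,4)$, while $d_1>4$ would force $\tau(\overline{\D})<\tau_{max}(12,4)$ by \cite[Remark 5.4]{Der}, we conclude $\overline{\D}$ is free with exponents $(4,7)$. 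Then I would re-add $\overline{L_4}$ and $\overline{L_6}$ one at a time exactly as in Theorem \ref{thm20}, applying the exact sequence \eqref{eq1} with the same values $k=4$ and $k=5$ and the same incidence numbers $r=8$, which are lattice-determined since both added lines already belong to $\overline{\CC}$; this yields $D_0(\overline{g})_4=0$, $\dim D_0(\overline{g})=1$ at the intermediate stage, and finally that the minimal exponent of $\overline{\CC}$ is at least $6$. Combined with $\tau(\overline{\CC})=127=\tau_{max}(14,6)$, this forces $\overline{\CC}$ to be free with exponents $(6,7)$.

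The one point requiring care — and the main (mild) obstacle — is the appeal to the exact sequence \eqref{eq1}: I must verify that the quantities entering it, in particular $r=|L\cap\B'|$ for the line being added and the dimensions $D_0(g')_k$ appearing as known inputs, are genuinely functions of the intersection lattice at each stage. For $r$ this is immediate, since $L_4$ and $L_6$ are lines of $\overline{\CC}$ and the number of multiple points of $\overline{\CC}$ on a given line is a lattice invariant. For the dimensions $D_0(g')_k$, these are controlled inductively: $\dim D_0(\overline{\D})$ is determined by freeness of $\overline{\D}$ (established in the first step), and $\dim D_0(\overline{\CC'})$ is then pinned down by the first application of \eqref{eq1}, so no non-combinatorial input ever enters. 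Hence the entire chain of reasoning transfers verbatim to $\overline{\CC}$, and Terao's Conjecture holds for $\CC_{14}$.
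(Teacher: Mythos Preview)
Your proposal is correct and follows exactly the same approach as the paper, which simply states that the proofs of Theorems \ref{thm10} and \ref{thm20} apply verbatim to any arrangement $\overline{\CC}$ with the same intersection lattice. Your expanded justification that every input to those proofs (Tjurina numbers, multiplicities, the incidence counts $r$, and the inductively determined dimensions $D_0(g')_k$) is lattice-determined is precisely the content behind the paper's one-sentence proof.
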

Indeed, for any line arrangement $\overline \CC$, having the same intersection lattice with $\CC$, the proofs of Theorems \ref{thm10} and \ref{thm20} can be applied to yield this claim.

\begin{rk}
\label{rkDF}
    Both arrangements $\A$ and $\CC$ are divisionally free~\cite{DivFree}. Indeed the characteristic polynomial $\chi(\A,t)$ is 
    $(t-1)(t-6)^2$ and the characteristic polynomial of the restriction $\A^L$ of $\A$ to the line $L=L_8$ considered in the proof of Theorem \ref{thm2} is $\chi(\A^L,t)=(t-1)(t-6)$. Similarly, the characteristic polynomial $\chi(\mathcal{C},t)$ is 
    $(t-1)(t-6)(t-7)$ and the characteristic polynomial of the restriction $\CC^L$ of $\CC$ to the line $L=L_6$ considered in the proof of Theorem \ref{thm20} is $\chi(\A^L,t)=(t-1)(t-7)$. Since $\chi(\A,t)$ is divisible by
    $\chi(\A^L,t)$ and respectively $\chi(\CC,t)$ is divisible by
    $\chi(\CC^L,t)$, it follows that both line arrangements $\A$ and $\CC$ are divisionally free.
This fact also, in particular, proves that they are free.
As being divisionally free is a combinatorial property depending on the intersection lattice only, this also yields a second proof of Corollary~\ref{cor2} and~\ref{cor20}. 

In fact, for a line arrangement $\A$ in $\PP^2$ being divisionally free is equivalent to the existence of a line $L \in \A$ such that $\A$ and deleted arrangement $\A'=\A \setminus L$ are both free, see \cite[Theorem  3.11]{DivFree}. In particular, we see in this way that the arrangement $\B_{12}=\A \setminus L_8$ in the proof of Theorem \ref{thm2} and the arrangement $\CC'=\CC \setminus L_6$ in the proof of Theorem \ref{thm20} are both free.
        
On the other hand, note that the arrangement $\A$ in Theorem \ref{thm02} (2) and the arrangement $\B$ in Theorem \ref{thm03} (2)
are also divisionally free. However, the monomial arrangements in Example \ref{ex03} are not divisionally free and  satisfy $d_1=m+1$.
More generally, the arrangement
$$\A^0_3(r): \prod_{0\leq n <r}(x-\zeta^ny)(x-\zeta^nz)(y-\zeta^nz)=0 $$
satisfies $d_1=m+1$ and is not divisionally free, see \cite[Theorem 5.6]{DivFree}.

It would be interesting to find free line arrangements with minimal number of lines which are not divisionally free and which satisfy  $d_1=m+2$.

\end{rk}

\section*{Acknowledgement}

Alexandru Dimca is partially supported from the project ``Singularities and Applications'' - CF 132/31.07.2023 funded by the European Union - NextGenerationEU - through Romania's National Recovery and Resilience Plan.

Lukas K\"uhne is funded by the Deutsche Forschungsgemeinschaft (DFG, German Research Foundation) – Project-ID 491392403 – TRR 358 and SPP 2458 -- 539866293.

Piotr Pokora is supported by the National Science Centre (Poland) Sonata Bis Grant \textbf{2023/50/E/ST1/00025}. For the purpose of Open Access, the author has applied a CC-BY public copyright license to any Author Accepted Manuscript (AAM) version arising from this submission.

\section*{Conflict of Interests}
We declare that there is no conflict of interest regarding the publication of this paper.
\section*{Data Availability Statement}
We do not analyse or generate any datasets, because this work proceeds within a theoretical and mathematical approach.

\end{document}